\newcommand{\N}{\mathbb{N}} 
\newcommand{\R}{\mathbb{R}}
\newcommand{\argmin}[1]{\underset{#1}{\mathrm{argmin}}}
\newcommand{\card}{{\rm card}}
\newcommand{\rank}{{\rm rank}}
\newcommand{\sign}{{\rm sgn}}
\newcommand{\snr}{{\rm SNR}}
\newcommand{\supp}{{\rm supp}}
\theoremstyle{plain}
\newtheorem{theorem}{Theorem}[section]
\newtheorem*{theorem*}{Theorem}
\newtheorem{proposition}[theorem]{Proposition}
\newtheorem{corollary}[theorem]{Corollary}
\theoremstyle{definition}
\newtheorem{example}[theorem]{Example}
\theoremstyle{remark}
\newtheorem{remark}[theorem]{Remark}
\numberwithin{equation}{section}
\numberwithin{algorithm}{section}
\numberwithin{figure}{section}
\numberwithin{table}{section}
\title{On the Convergence of the SINDy Algorithm}
\author{Linan Zhang}
\author{Hayden Schaeffer}
\affil{Department of Mathematical Sciences, Carnegie Mellon University, Pittsburgh, PA 15213. (\text{linanz@andrew.cmu.edu}, { }\text{schaeffer@cmu.edu})}
\begin{document}
\maketitle

\begin{abstract}
One way to understand time-series data is to identify the underlying dynamical system which generates it. This task can be done by selecting an appropriate model and a set of parameters which best fits the dynamics while providing the simplest representation (\textit{i.e.} the smallest amount of terms). One such approach 
is the \textit{sparse identification of nonlinear dynamics} framework \cite{BruntonProctorKutz16PNAS} which uses a sparsity-promoting algorithm that iterates between a partial least-squares fit and a thresholding (sparsity-promoting) step. In this work, we provide some theoretical results on the behavior and convergence of the algorithm proposed in \cite{BruntonProctorKutz16PNAS}. In particular, we prove that the algorithm approximates local minimizers of an unconstrained $\ell^0$-penalized least-squares problem. From this, we provide sufficient conditions for general convergence, rate of convergence,  and conditions for one-step recovery. Examples illustrate that the rates of convergence are sharp. In addition, our results extend to other algorithms related to the algorithm in  \cite{BruntonProctorKutz16PNAS}, and provide theoretical verification to several observed phenomena.  \end{abstract}

\section{Introduction}
\label{sec: intro}

Dynamic model identification arises in a variety of fields, where one would like to learn the underlying equations governing the evolution of some given time-series data $u(t)$. This is often done by learning a first-order differential equation $\dot{u}=f(u)$ which provides a reasonable model for the dynamics. The function $f$ is unknown and must be learned from the data. Some applications including, weather modeling and prediction, development and design of aircraft, modeling the spread of disease over time, trend predictions, \textit{etc.}

Several analytical and numerical approaches have been developed to solve various model identification problems. One important contribution to model identification is \cite{BongardLipson07, SchmidtLipson09}, where the authors introduced a symbolic regression algorithm to determine underlying physical equations, like equations of motion or energies, from data. The key idea is to learn the governing equation directly from the data by fitting the derivatives with candidate functions while balancing between accuracy and parsimony. In \cite{BruntonProctorKutz16PNAS}, the authors proposed the \textit{sparse identification of nonlinear dynamics (SINDy) algorithm}, which computes sparse solutions to linear systems related to model identification and parameter estimation. The main idea is to convert the (nonlinear) model identification problem to a linear system:
\begin{align}
Ax = b, \label{eq: sparse problem}
\end{align}
where the matrix $A\in\R^{m\times n}$ is a data-driven dictionary whose columns are (nonlinear) candidate functions of the given data $u$, the unknown vector $x\in\R^n$ represents the coefficients of the selected terms in the governing equation $f$, and the vector $b\in\R^m$ is an approximation to the first-order time derivative $\dot{u}$. In this method, the number of candidate functions are fixed, and thus one assumes that the set of candidate functions is sufficiently large to capture the nonlinear dynamics present in the data. In order to select an accurate model (from the set of candidate functions) which does not overfit the data, the authors of \cite{BruntonProctorKutz16PNAS} proposed a sparsity-promoting algorithm. In particular, a sparse vector $x$ which approximately solves Equation \eqref{eq: sparse problem} is generated by the following iterative scheme:
\begin{subequations}\label{eq: sparse representation algorithm}
\begin{align}
S^k &= \{1\le j\le n: |x_j^k| \ge \lambda \}, \\
x^{k+1} &= \argmin{x\in\R^n:\ \supp(x)\subseteq S^k} \|Ax-b\|_2,
\end{align}
\end{subequations}
where $\lambda>0$ is a thresholding parameter and $\supp(x)$ is the support set of $x$. In practice, it was observed that the algorithm converged within a few steps and produced an appropriate sparse approximation to Equation~\eqref{eq: sparse problem}. These observations are quantified in our work.

There are several approaches which leverage sparse approximations for model identification. In \cite{KaiserKutzBrunton17},  the authors combined the SINDy framework with model predictive control to solve model identification problems given noisy data. The resulting algorithm is able to control nonlinear systems and identify models in real-time. 
In \cite{ManganKutzBruntonProctor17}, the authors introduced information criteria to the SINDy framework, where they selected the optimal model (with respect to the chosen information criteria) over various values of the thresholding parameter. Other approaches have been developed based on the SINDy framework, including: SINDy for rational governing equations \cite{ManganBruntonProctoKutz16}, SINDy with control \cite{BruntonProctorKutz16IFAC}, and SINDy for abrupt changes \cite{QuadeAbelKutzBrunton18}.  

In \cite{SchaefferMcCalla17}, a sparse regression approach for identifying dynamical systems via the weak form was proposed. The authors used the following constrained minimization problem:
\begin{align*}
\min_x \|x\|_0 \quad \text{subject to } \|Ax-b\|_2\le \sigma, 
\end{align*}
where the dictionary matrix $A$ is formulated using an integrated set of candidate functions. In \cite{SchaefferTranWard17jul}, several sampling strategies were developed for learning dynamical equations from high-dimensional data. It was proven analytically and verified numerically that under certain conditions, the underlying equation can be recovered exactly from the following constrained minimization problem, even when the data is under-sampled: 
\begin{align*}
\min_x \|x\|_1 \quad \text{subject to } \|Ax-b\|_2\le \sigma, 
\end{align*}
where the dictionary matrix $A$ consists of second-order Legendre polynomials applied to the data. In \cite{SchaefferTranWard17sep}, the authors developed an algorithm for learning dynamics from multiple time-series data, whose governing equations have the same form but different (unknown) parameters. The authors provided convergence guarantees for their group-sparse hard thresholding pursuit algorithm; in particular, one can recover the dynamics when the data-drive dictionary is coercive. In \cite{TranWard17}, the authors provided conditions for exact recovery of dynamics from highly corrupted data generated by Lorenz-like systems. To separate the corrupted data from the uncorrupt points, they solve the minimization problem:
\begin{align*}
\min_{x,\eta} \|\eta\|_{2,1} \quad \text{subject to } Ax + \eta = b \text{ and $x$ is sparse},
\end{align*}
where the residual $\eta:=b-Ax$ is the variable representing the (unknown) corrupted locations. When the data is  a function of both time and space, {\it i.e.} $u=u(t,y)$ for some spatial variable $y$, the dictionary can incorporate spatial derivatives \cite{Schaeffer17, RudyBruntonProctorKutz17}. In  \cite{Schaeffer17}, an unconstrained $\ell^1$-regularized least-squares problem (LASSO \cite{Tibshirani96}) with a dictionary build from nonlinear functions of the data and its spatial partial derivatives was used to discover PDE from data. In \cite{RudyBruntonProctorKutz17}, the authors proposed an adaptive SINDy algorithm for discovering PDE, which iteratively applies ridge regression with hard thresholding. Additional approaches for model identification can be found in \cite{SorokinaSygletosTuritsyn16, BoninsegnaNuskeClementi17, DamBronsRasmussenNaulinHesthaven17, LongLuMaDong17, PantazisTsamardinos17, LoiseauBrunton18, RaissiKarniadakis18, SchaefferTranWardZhang2018}.

The sparse model identification approaches include a sparsity-promoting substep, typically through various thresholding operations. In particular, the SINDy algorithm, {\it i.e.} Equation \eqref{eq: sparse representation algorithm}, alternates between a reduced least-squares problem and a thresholding step. This is related to, but differs from, the iterative thresholding methods widely used in compressive sensing. To find a sparse representation of $x$ in Problem \eqref{eq: sparse problem}, it is natural to solve the $\ell^0$-minimization problems, where the $\ell^0$-penalty of a vector measures the number of its nonzero elements. In \cite{BlumensathYaghoobiDavies07, BlumensathDavies08, BlumensathDavies09}, the authors provided iterative schemes to solve the unconstrained and constrained $\ell^0$-regularized problems:
\begin{align}
&\min_x \|Ax-b\|_2^2 + \lambda^2\|x\|_0, \label{eq: l0 unconstrained} \\
&\min_x \|Ax-b\|_2^2 \quad \text{subject to } \|x\|_0\le s, \label{eq: l0 constrained}
\end{align}
respectively, where $\|A\|_2=1$. To solve Problem \eqref{eq: l0 unconstrained}, one iterates: 
\begin{align}
x^{k+1} = H_{\lambda}(x^k + A^T(b-Ax^k)), \label{eq: IHT unconstrained}
\end{align}
where $H_{\lambda}$ is the hard thresholding operator defined component-wise by:
\begin{align*}
H_{\lambda}(x)_j := \sign(x_j)\max(|x_j|,\lambda).
\end{align*}
To solve Problem \eqref{eq: l0 constrained}, one iterates: 
\begin{align}
x^{k+1} = L_s(x^k + A^T(b-Ax^k)), \label{eq: IHT constrained}
\end{align}
where $L_s$ is a nonlinear operator that only retains $s$ elements of $x$ with the largest magnitude and sets the remaining $n-s$ elements to zero.

The authors of \cite{BlumensathYaghoobiDavies07, BlumensathDavies08, BlumensathDavies09} also proved that the iterative algorithms defined by Equations \eqref{eq: IHT unconstrained} and \eqref{eq: IHT constrained} converge to the local minimizers of Problems \eqref{eq: l0 unconstrained} and \eqref{eq: l0 constrained}, respectively, and derived theoretically the error bounds and convergence rates for the solutions obtained via Equation \eqref{eq: IHT unconstrained}. In this work, we will show that the SINDy algorithm also finds local minimizers of Equation \eqref{eq: l0 unconstrained} and has similar theoretical guarantees.

\subsection{Contribution}

In this work, we show (in Section \ref{sec: analysis}) that the SINDy algorithm proposed in \cite{BruntonProctorKutz16PNAS} approximates the local minimizers of Problem \eqref{eq: l0 unconstrained}. We provide sufficient conditions for convergence and bounds on rate of convergence. We also prove that the algorithm typically converges to a local minimizer rapidly (in a finite number of steps). Based on several examples, the rate of convergence is sharp. We also show that the convergence results can be adapted to other SINDy-based algorithms. In Section~\ref{sec: application}, we highlight some of the theoretical results by applying the algorithm from \cite{BruntonProctorKutz16PNAS} to identify dynamical systems from noisy measurements.

\section{Convergence Analysis}
\label{sec: analysis}

Before detailing the results, we briefly introduce some notations and conventions. For an integer $n\in\N$, let $[n]\subset\N$ be the set defined by: $[n] := \{1,2,\cdots,n\}$.  Let $A$ be a matrix in $\R^{m\times n}$, where $m\ge n$. If $A$ is injective (or equivalently if $A$ is full column rank, \textit{i.e.} $\rank(A)=n$), then its pseudo-inverse $A^\dagger\in\R^{n\times m}$ is defined as $A^\dagger := (A^TA)^{-1}A^T$. Let $x\in\R^n$ and define the support set of $x$ as the set of indices corresponding to its nonzero elements, {\it i.e.},
\begin{align*}
\supp(x):=\{j\in[n]:x_j\ne0\}.
\end{align*}
The $\ell^0$ penalty of $x$ measures the number of nonzero elements in the vector and is defined as:
\begin{align*}
\|x\|_0:=\card(\supp(x)).
\end{align*}
The vector $x$ is called $s$-sparse if it has at most $s$ nonzero elements, thus $\|x\|_0\le s$.

Given a set $S\subseteq[n]$, where $n\in\N$ is known from the context, define $\bar{S}:=[n]\backslash S$.  For a matrix $A\in\R^{m\times n}$ and a set $S\subseteq[n]$, we denote by $A_S$ the submatrix of A in $\R^{m\times s}$ which consists of the columns of $A$ with indices $j\in S$, where $s=\card(S)$. Similarly, for a vector $x=\begin{pmatrix} x_1, x_2, \cdots, x_n \end{pmatrix}^T$, let $x_S$ be the subvector of $x$ in $\R^s$ consisting of the elements of $x$ with indices $j\in S$, or the vector in $\R^n$ which coincides with $x$ on $S$ and is zero outside $S$:
\begin{align*}
(x_S)_j =
\begin{cases} x_j &\quad \text{if }j\in S, \\
0 &\quad \text{if }j\in\bar{S}. \end{cases}
\end{align*}
The representation of $x_S$ should be clear within the context.

\subsection{Algorithmic Convergence}
Let $A\in\R^{m\times n}$ be a matrix with $m\ge n$ and $\rank(A)=n$, $x\in\R^n$ be the unknown signal, and $b\in\R^m$ be the observed data. The results presented here work for general $A$ satisfying these assumptions, but the specific application of interest is detailed in Section \ref{sec: application}. The SINDy algorithm from \cite{BruntonProctorKutz16PNAS} is:
\begin{subequations}\label{eq: iterative scheme}
\begin{align}
x^0 &= A^\dagger b, \label{eq: update x0} \\
S^k &= \{j\in[n]: |x_j^k| \ge \lambda \}, \quad k\ge 0, \label{eq: update sk} \\
x^{k+1} &= \argmin{x\in\R^n: \supp(x)\subseteq S^k} \|Ax-b\|_2 \quad k\ge 0. \label{eq: update xk}
\end{align}
\end{subequations}
which is used to find a sparse approximation to the solution of $Ax=b$. The following theorem shows that the SINDy algorithm terminates in finite steps.

\begin{theorem} \label{thm: n steps}
The iterative scheme defined by Equation \eqref{eq: iterative scheme} converges in at most $n$ steps.
\end{theorem}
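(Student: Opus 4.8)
The plan is to show that the support sets $S^k$ produced by \eqref{eq: iterative scheme} form a nested, monotonically shrinking sequence, and that the iteration has reached a fixed point as soon as two consecutive support sets coincide. Bounding the length of a strictly decreasing chain of subsets of $[n]$ then yields the claimed bound, provided the initial step is handled carefully.

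First I would record two structural facts about a single step. Since $x^{k+1}$ is, by \eqref{eq: update xk}, supported on $S^k$, we have $\supp(x^{k+1}) \subseteq S^k$. On the other hand, every index counted in $S^{k+1}$ carries $|x^{k+1}_j| \ge \lambda > 0$ and is therefore nonzero, so $S^{k+1} \subseteq \supp(x^{k+1})$. Chaining these gives $S^{k+1} \subseteq \supp(x^{k+1}) \subseteq S^k$, so the supports are nested: $S^0 \supseteq S^1 \supseteq \cdots$. Next I would show that equality propagates: if $S^{k+1} = S^k$, then \eqref{eq: update xk} defines $x^{k+2}$ and $x^{k+1}$ as minimizers of the same least-squares problem under the same support constraint; because $\rank(A) = n$ forces $A_{S^k}$ to be injective, this minimizer is unique, so $x^{k+2} = x^{k+1}$. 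Consequently $S^{k+2} = S^{k+1}$, and by induction all later iterates coincide, so the algorithm has converged.

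It then remains to bound the first index $K$ at which $S^{K} = S^{K-1}$. For every step before stabilization the inclusion $S^{k+1} \subsetneq S^k$ is strict, so the cardinalities $\card(S^0) > \card(S^1) > \cdots$ strictly decrease; as these are integers in $\{0,1,\dots,n\}$, at most $n+1$ distinct sets can appear. The naive count gives only $K \le n+1$, which is off by one from the claim, and tightening this is the crux. The fix is the special structure of the very first step: if $\card(S^0) = n$, that is $S^0 = [n]$, then \eqref{eq: update xk} computes $x^1$ as the unconstrained least-squares solution $A^\dagger b = x^0$, so $S^1 = S^0$ and the method converges immediately. Hence a genuinely strict chain must begin from $\card(S^0) \le n-1$, forcing the strictly decreasing cardinalities into $\{0,1,\dots,n-1\}$ and giving convergence in at most $n$ steps.

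The main obstacle I expect is precisely this off-by-one bookkeeping: a straightforward chain-length argument only yields $n+1$, and obtaining the sharp bound $n$ requires isolating the degenerate case $S^0 = [n]$ and exploiting the fact that the zeroth iterate is already the global least-squares solution. The remaining ingredients -- nesting of the supports, uniqueness of the restricted least-squares minimizer under $\rank(A) = n$, and the counting of a strictly decreasing chain of finite sets -- are routine once these observations are in place.
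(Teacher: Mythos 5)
Your proof is correct and follows essentially the same route as the paper: nested support sets $S^{k+1}\subseteq\supp(x^{k+1})\subseteq S^k$, propagation of stabilization once $S^{k+1}=S^k$ via uniqueness of the restricted least-squares solution, and counting a strictly decreasing chain of subsets of $[n]$. Your additional observation---that $S^0=[n]$ forces $x^1=A^\dagger b=x^0$ and hence immediate stabilization, so any genuinely strict chain must start from $\card(S^0)\le n-1$---is a refinement the paper glosses over: its own count stops at ``$M\le n$,'' which is loose by exactly the off-by-one you identify, and your degenerate-case analysis is what makes the bound of $n$ update steps literally correct.
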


\begin{proof}
Let $x^k$ be the sequence generated by Equation \eqref{eq: iterative scheme}. By Equation \eqref{eq: update xk} we have $\supp(x^{k+1})\subseteq S^k$, and from Equation \eqref{eq: update sk} we have $S^{k+1}\subseteq \supp(x^{k+1})$. Therefore, the sets $S^k$ are nested:
\begin{align}
& S^{k+1} \subseteq \supp(x^{k+1}) \subseteq S^k. \label{eq: nested sk} 
\end{align}
Consider the following two cases. If there exists an integer $M\in\N$ such that $S^{M+1} = S^M$, then:
\begin{align}
x^{M+2} &= \argmin{ \supp(x)\subseteq S^{M+1}} \|Ax-b\|_2 = \argmin{\supp(x)\subseteq S^M} \|Ax-b\|_2 = x^{M+1}. \label{eq: stopping criterion}
\end{align}
Thus, $x^k =x^{M+1}$ for all $k \ge M+1$, and $S^k=S^M$ for all $k\ge M$. Since $\card(S^k) \le n$ for all $k\in\N$, we conclude that $M\le n$, so that the scheme converges in at most $n$ steps.

On the other hand, if there does not exist an integer $M$ such that $S^{M+1} = S^M$ and $S^M\ne\emptyset$, then we have a sequence of strictly nested sets, {\it i.e.}, 
\begin{align*}
S^{k+1} \subsetneq S^k \quad \text{for all } k \text{ such that } S^k\ne\emptyset.
\end{align*}
Since $\card(S^k) \le n$ for all $k\in\N$, we must have $S^k=\emptyset$ for all $k>n$. Therefore, the scheme converges to the trivial solution within $n$ steps.
\end{proof}

\begin{remark} \label{rem: stopping criterion}
Equation \eqref{eq: stopping criterion} suggests that an appropriate stopping criterion for the scheme is that the sets are stationary, \textit{i.e.} $S^{k}=S^{k-1}$.
\end{remark}

Note that, since the support sets are nested, the scheme will converge in at most $\card(S^0)$ steps. The following is an immediate consequence of Theorem \ref{thm: n steps}.

\begin{corollary} \label{cor: n-s steps}
The iterative scheme defined by Equation \eqref{eq: iterative scheme} converges to an $s$-sparse solution in at most $\card(S^0)-s$ steps.
\end{corollary}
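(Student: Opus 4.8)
The plan is to reuse the key structural fact established in the proof of Theorem \ref{thm: n steps}, namely that the support sets are nested, $S^{k+1}\subseteq\supp(x^{k+1})\subseteq S^k$, and to sharpen the counting argument by tracking the cardinalities $\card(S^k)$ rather than invoking only the crude bound $\card(S^k)\le n$. The decisive observation is the dichotomy already isolated in that proof: at each step $k$, either $S^{k+1}=S^k$, in which case the scheme has stabilized and $x^j=x^{k+1}$ for all $j\ge k+1$ by Equation \eqref{eq: stopping criterion}, or else the containment is strict, $S^{k+1}\subsetneq S^k$, which forces $\card(S^{k+1})\le\card(S^k)-1$.

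First I would let $x^\ast$ denote the limit of the scheme and $S^\ast$ its associated stabilized support set, so that $S^\ast=S^M$ for the first index $M$ at which $S^{M+1}=S^M$. I would then argue that this stabilized set has cardinality exactly $s$: the threshold rule $S^{M+1}=\{j\in[n]:|x_j^{M+1}|\ge\lambda\}$ together with $S^{M+1}=S^M$ shows that every coordinate of $x^{M+1}$ indexed by $S^M$ has magnitude at least $\lambda$ and is therefore nonzero, whence $\supp(x^{M+1})\supseteq S^M$; combined with $\supp(x^{M+1})\subseteq S^M$ this yields $\supp(x^\ast)=S^M$, so $\card(S^\ast)=\|x^\ast\|_0=s$.

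With this in hand the count is immediate. For each $k<M$ the containment is strict, so the cardinalities strictly decrease along the chain
\begin{align*}
\card(S^0)>\card(S^1)>\cdots>\card(S^M)=s,
\end{align*}
and since each of the $M$ steps drops the cardinality by at least one, we obtain $M\le\card(S^0)-s$. Thus the scheme reaches its stationary support, and hence its $s$-sparse limit, in at most $\card(S^0)-s$ steps. The degenerate case $s=0$ (convergence to the trivial solution) simply recovers the bound $\card(S^0)$ from Theorem \ref{thm: n steps}.

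The main point requiring care is not the arithmetic but the identification $\card(S^\ast)=s$: one must confirm that ``converges to an $s$-sparse solution'' is consistent with the stabilized support having size exactly $s$ rather than merely at most $s$, which is precisely what the thresholding characterization of $S^{M+1}$ guarantees. Once that identification is secured, the corollary follows from Theorem \ref{thm: n steps} by replacing the trivial endpoints $\card(S^0)\le n$ and $\card(S^M)\ge 0$ of the cardinality chain with the sharper endpoints $\card(S^0)$ and $\card(S^M)=s$.
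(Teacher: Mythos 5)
Your proof is correct and follows exactly the argument the paper intends: the corollary is stated as an immediate consequence of Theorem \ref{thm: n steps}, whose proof already contains the nestedness $S^{k+1}\subseteq\supp(x^{k+1})\subseteq S^k$ and the strict-decrease-until-stationarity dichotomy, and your count $\card(S^0)>\card(S^1)>\cdots>\card(S^M)=s$ is the sharpened version of the paper's $\card(S^k)\le n$ bound. Your care in verifying $\card(S^M)=\|x^*\|_0=s$ via the thresholding rule $S^{M+1}=S^M$ is a worthwhile detail the paper leaves implicit, but it is not a departure from its approach.
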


\subsection{Convergence to the Local minimizers}

In this section, we will show that the iterative scheme defined by Equation \eqref{eq: iterative scheme} produces a minimizing sequence for a non-convex objective associated with sparse approximations. This will lead to a clearer characterization of the fixed-points of the iterative scheme.

Without loss of generality, assume in addition that $\|A\|_2=1$. We first show that the scheme converges to a local minimizer of the following (non-convex) objective function:
\begin{align}
F(x) := \|Ax-b\|_2^2 + \lambda^2\|x\|_0, \quad x\in\R^n. \label{eq: objective function}
\end{align}

\begin{theorem} \label{thm: energy decreases}
The iterates $x^k$ generated by Equation \eqref{eq: iterative scheme} strictly decreases the objective function unless the iterates are stationary.
\end{theorem}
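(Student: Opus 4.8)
The plan is to control the one-step change $F(x^{k+1}) - F(x^k)$ by splitting it into the residual part $\|Ax^{k+1} - b\|_2^2 - \|Ax^k - b\|_2^2$ and the penalty part $\lambda^2(\|x^{k+1}\|_0 - \|x^k\|_0)$, and to show that whenever the thresholding step actually removes a coordinate, the penalty drops by strictly more than the residual can rise. The case split is dictated by the set of coordinates deleted at step $k$, namely $T^k := \supp(x^k)\setminus S^k = \{j : 0 < |x_j^k| < \lambda\}$. When $T^k = \emptyset$ we have $S^k = \supp(x^k)$, and since $x^k$ was obtained as a least-squares fit on the larger support $S^{k-1} \supseteq \supp(x^k)$ (for $k=0$, on all of $[n]$), it remains optimal when the feasible region is shrunk to $\supp(x^k)=S^k$; uniqueness of the full-rank least-squares solution then forces $x^{k+1} = x^k$, so the iterates are stationary and $F$ is unchanged. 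The content of the theorem is therefore the strict decrease in the complementary case $T^k \neq \emptyset$.

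For the penalty part I would use the nesting $\supp(x^{k+1}) \subseteq S^k$ established in the proof of Theorem \ref{thm: n steps}, together with the disjoint decomposition $\supp(x^k) = S^k \sqcup T^k$, to get $\|x^{k+1}\|_0 \le \card(S^k) = \|x^k\|_0 - \card(T^k)$, so the penalty contributes at most $-\lambda^2\card(T^k)$. For the residual part I would compare $x^{k+1}$ against the thresholded vector $\tilde{x}^k := (x^k)_{S^k}$, which is feasible for the minimization defining $x^{k+1}$; hence $\|Ax^{k+1}-b\|_2 \le \|A\tilde{x}^k - b\|_2$. Writing $\tilde{x}^k = x^k - (x^k)_{T^k}$ and expanding the square reduces the residual estimate to a cross term $\langle Ax^k - b,\,A(x^k)_{T^k}\rangle$ plus the quadratic $\|A(x^k)_{T^k}\|_2^2$.

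The crux — the step I expect to carry the argument — is that the cross term vanishes by orthogonality. Because $x^k$ solves a least-squares problem on $S^{k-1}$ (or on all of $[n]$ when $k=0$), the normal equations give $[A^T(Ax^k-b)]_j = 0$ for every $j \in S^{k-1}$, and since $T^k \subseteq \supp(x^k) \subseteq S^{k-1}$ this holds on $T^k$ in particular; hence $\langle Ax^k-b,\,A(x^k)_{T^k}\rangle = \langle A^T(Ax^k-b),\,(x^k)_{T^k}\rangle = 0$. Feeding this back into the expansion and using $\|A\|_2 = 1$ then yields
\begin{align*}
\|Ax^{k+1}-b\|_2^2 - \|Ax^k-b\|_2^2 \le \|A(x^k)_{T^k}\|_2^2 \le \|(x^k)_{T^k}\|_2^2 = \sum_{j\in T^k}(x_j^k)^2 < \lambda^2\,\card(T^k),
\end{align*}
the final inequality being strict because every deleted coordinate satisfies $|x_j^k|<\lambda$ and $T^k\neq\emptyset$.

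Combining the two estimates gives $F(x^{k+1}) - F(x^k) < \lambda^2\card(T^k) - \lambda^2\card(T^k) = 0$, the desired strict decrease. The only delicate point is the orthogonality identity: it is exactly the optimality of the least-squares substep on the previous support that makes discarding the small coordinates cost no more than the squared $A$-image of those coordinates, and the threshold $\lambda$ is tuned so that this image is always dominated by the per-coordinate penalty $\lambda^2$. I would treat $k=0$ and $k\ge 1$ uniformly, the only difference being that the normal equations hold on all of $[n]$ rather than on $S^{k-1}$.
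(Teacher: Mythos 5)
Your proof is correct and is essentially the paper's argument with the surrogate-function scaffolding removed: the paper packages the same computation as optimization transfer via $G(x,y)=\|Ax-b\|_2^2-\|A(x-y)\|_2^2+\|x-y\|_2^2+\lambda^2\|x\|_0$, but its comparison vector $y^k=x^k_{S^k}$ is your $\tilde{x}^k$, its deleted set $\bar{S}^k\cap\supp(x^k)$ is your $T^k$, it kills the same cross term via the normal equations $\left(A^T(Ax^k-b)\right)_{S^{k-1}}=0$, and it closes with the same strict bound $\|x^k_{T^k}\|_2^2<\lambda^2\,\card(T^k)$, with $\|A\|_2=1$ entering through the identical inequality $\|A(x^k-y^k)\|_2^2\le\|x^k-y^k\|_2^2$. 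If anything, your treatment of the stationary case is slightly more careful than the paper's (optimality persisting on the shrunken feasible set plus injectivity of $A_{S^k}$ forces $x^{k+1}=x^k$, where the paper simply asserts $\supp(x^k)=S^k$ implies $x^k=x^{k+1}$).
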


\begin{proof}
Define the auxiliary variable:
\begin{align}
y^k := x^k_{S^k}, \quad k\in\N, \label{eq: update yk}
\end{align} 
which plays the role of an intermediate approximation.
In particular, we will relate $x^{k+1}$ and $y^k$. 

Observe that Equation \eqref{eq: iterative scheme} emits several useful properties. First, we have shown in Equation \eqref{eq: nested sk} that $S^{k+1} \subseteq \supp(x^{k+1}) \subseteq S^k$. Next, by Equation \eqref{eq: update xk}, $x^{k+1}$ is the least-squares solution over the set $S^k$. By considering the derivative of $\|Ax-b\|_2^2$ with respect to $x$, we obtain that:
\begin{align}
\left(A^T (Ax^{k+1}-b) \right)_{S^k}=0, \label{eq: A^TAx=A^Tb on S}
\end{align}
and the solution $x^{k+1}$ to the above equation satisfies $x^{k+1}_{S^k} = (A_{S^k})^\dagger b$.
To relate $x^{k+1}$ and $y^k$, note that Equations \eqref{eq: nested sk} and \eqref{eq: update yk} imply that: 
\begin{align}
\supp(x^{k+1})\subseteq \supp(y^k) = S^k \subseteq \supp(x^k). \label{eq: nested sk with yk}
\end{align}
By Equations \eqref{eq: update xk} and \eqref{eq: nested sk with yk}, we have:
\begin{align}
\|Ax^{k+1}-b\|_2 \leq \|Ay^{k}-b\|_2, \quad \text{and} \quad \|x^{k+1}\|_0\le\|y^k\|_0, \label{eq: xk yk l2 norm}
\end{align}
respectively.

To show that the objective function decreases, we use the optimization transfer technique as in \cite{BlumensathYaghoobiDavies07}. Define the surrogate function $G$ for $F$:
\begin{align}
G(x,y) := \|Ax-b\|_2^2 - \|A(x-y)\|_2^2 + \|x-y\|_2^2 + \lambda^2\|x\|_0, \quad x\in\R^n. \label{eq: surrogate function}
\end{align}
Since $\|A\|_2 = 1$, the term $- \|A(x-y)\|_2^2 + \|x-y\|_2^2$ is non-negative:
\begin{align*}
- \|A(x-y)\|_2^2 + \|x-y\|_2^2 \ge -\|A\|_2^2\|x-y\|_2^2 + \|x-y\|_2^2 = 0,
\end{align*}
and thus we have $G(x,y)\geq F(x)$ and $G(x,x)=F(x)$ for all $x,y\in\R^n$.

Define the matrix $B:=I-A^TA$. Since $A$ is injective (which is implied by $\rank(A)=n$) and $\|A\|_2 = 1$, we have that the eigenvalues of $B$ are in the interval $[0,1]$. Fixing the index $k\in\N$, from Equations \eqref{eq: objective function} and \eqref{eq: xk yk l2 norm}-\eqref{eq: surrogate function}, we have:
\begin{align*}
F(x^{k+1}) = \|Ax^{k+1}-b\|_2^2 + \lambda^2\|x^{k+1}\|_0 &\le \|Ax^{k+1}-b\|_2^2 + \lambda^2\|x^{k+1}\|_0 + \|x^k - y^k\|_B^2 \\
&\le \|Ay^k-b\|_2^2 + \lambda^2\|y^k\|_0 + \|x^k - y^k\|_B^2 = G(y^k,x^k).
\end{align*}
It remains to show that $G(y^k,x^k) \le G(x^k,x^k)$. By Equation \eqref{eq: update yk}, we have:
\begin{align*}
x^k-y^k = x^k-x^k_{S^k}=x^k_{\supp(x^k)}-x^k_{S^k\cap \supp(x^k)}=x^k_{\bar{S}^k\cap\supp(x^k)},
\end{align*}
where we included the intersection with $\supp(x^k)$ to emphasize that the difference is zero outside of the support set of $x^k$. Thus, the difference with respect to the surrogate function simplifies to:
\begin{align}
&G(y^k,x^k) - G(x^k,x^k) \nonumber \\
&\quad= \|Ay^k-b\|_2^2 - \|A(y^k-x^k)\|_2^2 + \|y^k-x^k\|_2^2 + \lambda^2\|y^k\|_0  - \|Ax^k-b\|_2^2 - \lambda^2\|x^k\|_0 \nonumber \\
&\quad=- 2 \langle b, Ay^k \rangle   + 2 \langle Ay^k, Ax^k \rangle -2\|Ax^k\|_2^2 + 2 \langle b, Ax^k \rangle + \|x^k-y^k\|_2^2 + \lambda^2 \left( \|y^k\|_0 - \|x^k\|_0 \right) \nonumber \\
&\quad= -2 \langle y^k-x^k, A^T(b-Ax^k) \rangle + \|x^k-y^k\|_2^2 + \lambda^2 \left( \|y^k\|_0 - \|x^k\|_0 \right) \nonumber \\
&\quad= -2 \langle x^k_{\bar{S}^k\cap\supp(x^k)}, A^T(Ax^k-b) \rangle + \|x^k_{\bar{S}^k\cap\supp(x^k)}\|_2^2 + \lambda^2 \left( \|y^k\|_0 - \|x^k\|_0 \right). \label{eq: g decreasing 1}
\end{align}
By Equations \eqref{eq: nested sk} and \eqref{eq: A^TAx=A^Tb on S}, we can observe that: 
\begin{align*}
\supp(x^k_{\bar{S}^k\cap\supp(x^k)})\subseteq \supp(x^k) \subseteq S^{k-1},
\quad \text{and} \quad
(A^T(Ax^k-b))_{S^{k-1}}=0,
\end{align*}
respectively, which together imply that:
\begin{align}
\langle x^k_{\bar{S}^k\cap\supp(x^k)}, A^T(Ax^k-b) \rangle = 0. \label{eq: g decreasing 2}
\end{align}
In addition, by Equation \eqref{eq: nested sk with yk}, we have:
\begin{align}
\card(S^k) - \card(\supp(x^k)) = -\card(\supp(x^k)\backslash S^k) = -\card(\bar{S}^k\cap\supp(x^k)). \label{eq: card sk xk}
\end{align}

Consider the following two cases. If $\bar{S}^k\cap\supp(x^k)=\emptyset$, then by Equation \eqref{eq: nested sk}, we must have $\supp(x^k)=S^k$, {\it i.e.} $x^k=x^{k+1}$. Therefore, $x^k$ is a fixed point, and $F(x^\ell)$ is stationary for all $\ell\ge k$. If $\bar{S}^k\cap\supp(x^k)\ne\emptyset$, then there exists an integer $j\in\bar{S}^k\cap\supp(x^k)$ such that $|x^k_j| < \lambda$, and thus: 
\begin{align}
\|x^k_{\bar{S}^k\cap\supp(x^k)}\|_2^2 < \lambda^2 \, \card(\bar{S}^k\cap\supp(x^k)). \label{eq: s-bar supp-x}
\end{align}
Thus, by Equations \eqref{eq: card sk xk} and \eqref{eq: s-bar supp-x}, provided that the iterates are not stationary, we have:
\begin{align}
&\|x^k_{\bar{S}^k\cap\supp(x^k)}\|_2^2 + \lambda^2 \left( \|y^k\|_0 - \|x^k\|_0 \right) \nonumber \\
&\quad= \|x^k_{\bar{S}^k\cap\supp(x^k)}\|_2^2 + \lambda^2\left( \card(S^k) - \card(\supp(x^k)) \right) \nonumber \\
&\quad< \lambda^2\card(\bar{S}^k\cap\supp(x^k)) + \lambda^2\left( \card(S^k) - \card(\supp(x^k))\right) = 0. \label{eq: g decreasing 3}
\end{align} 
Combining Equations \eqref{eq: g decreasing 1}, \eqref{eq: g decreasing 2}, and \eqref{eq: g decreasing 3} yields: 
\begin{align*}
G(y^k,x^k) - G(x^k,x^k) < 0.
\end{align*}
Therefore, for $k\in\N$ such that the $k$-iteration is not stationary, we have:
\begin{align*}
F(x^{k+1}) \le G(y^k,x^k) < G(x^k,x^k) = F(x^k),
\end{align*}
which completes the proof.
\end{proof}

In the following theorem, we show that the scheme converges to a fixed point, which is a local minimizer of the objective function $F$.

\begin{theorem} \label{thm: fixed point}
The iterates $x^k$ generated by Equation \eqref{eq: iterative scheme} converges to a fixed point of the iterative scheme defined by Equation \eqref{eq: iterative scheme}. A fixed point of the scheme is also a local minimizer of the objective function defined by Equation \eqref{eq: objective function}.
\end{theorem}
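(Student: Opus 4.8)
The plan is to split the statement into its two assertions and treat them in turn, leaning on the finite-termination result of Theorem \ref{thm: n steps}.

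First I would establish convergence to a fixed point. By Theorem \ref{thm: n steps} the support sets $S^k$ stabilize after at most $n$ steps: there is an index $M\le n$ with $S^{M+1}=S^M=:S^*$ (the empty-support case being a trivial instance of this). Equation \eqref{eq: stopping criterion} then gives $x^k=x^{M+1}$ for every $k\ge M+1$, so the sequence is eventually constant and converges to $x^*:=x^{M+1}$. Because the support sets are stationary from step $M$ onward, applying one further iteration of \eqref{eq: iterative scheme} to $x^*$ reproduces $x^*$, so $x^*$ is a fixed point. Moreover, the nesting \eqref{eq: nested sk} together with $S^{M+1}=S^M$ squeezes $\supp(x^{M+1})$ between $S^M$ and $S^M$, giving $\supp(x^*)=S^*=\{j\in[n]:|x^*_j|\ge\lambda\}$, while \eqref{eq: A^TAx=A^Tb on S} gives $x^*_{S^*}=(A_{S^*})^\dagger b$, equivalently $(A^T(Ax^*-b))_{S^*}=0$. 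These two facts, namely that every nonzero entry of $x^*$ has magnitude at least $\lambda$ and that $x^*$ is the least-squares solution on its own support, are the characterization of a fixed point that I would use throughout.

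Next I would show that such a fixed point is a local minimizer of $F$ in \eqref{eq: objective function}. Writing a nearby point as $x^*+h$, I would first restrict to $\|h\|_\infty<\min_{j\in\supp(x^*)}|x^*_j|$, which is positive since each such entry exceeds $\lambda$; this guarantees that no coordinate of the support of $x^*$ is annihilated, so $\supp(x^*)\subseteq\supp(x^*+h)$ and hence $\|x^*+h\|_0\ge\|x^*\|_0$. I would then split into two cases according to whether $h$ activates a coordinate outside $\supp(x^*)$. If $\supp(h)\subseteq\supp(x^*)$, then $\|x^*+h\|_0=\|x^*\|_0$, the penalty term is unchanged, and $F(x^*+h)-F(x^*)$ reduces to $\|A(x^*+h)-b\|_2^2-\|Ax^*-b\|_2^2\ge 0$, the inequality holding because $x^*$ minimizes the residual over all vectors supported in $S^*$.

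The main obstacle is the remaining case, where $h$ is nonzero on some coordinate outside $\supp(x^*)$. Here $\|x^*+h\|_0\ge\|x^*\|_0+1$, so the penalty jumps by at least $\lambda^2$; the difficulty is that the least-squares term may simultaneously decrease by an amount that is only linear in $\|h\|_2$, since $A^T(Ax^*-b)$ need not vanish off $S^*$. Expanding $\|A(x^*+h)-b\|_2^2-\|Ax^*-b\|_2^2=2\langle h,A^T(Ax^*-b)\rangle+\|Ah\|_2^2$ and using $(A^T(Ax^*-b))_{S^*}=0$, I would bound the cross term below by $-2\|h\|_2\,\|A^T(Ax^*-b)\|_2$, and then further shrink the neighborhood so that $\|h\|_2<\lambda^2/\bigl(2\|A^T(Ax^*-b)\|_2\bigr)$ whenever that residual is nonzero (imposing no extra constraint when it vanishes). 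On this neighborhood the $\lambda^2$ gain from the penalty strictly dominates the possible linear loss, yielding $F(x^*+h)-F(x^*)>\|Ah\|_2^2\ge 0$. Intersecting the two neighborhood constraints produces a ball around $x^*$ on which $F(x^*+h)\ge F(x^*)$, so $x^*$ is a local minimizer. The only subtlety to watch is the discontinuity of $\|\cdot\|_0$, which is exactly why the argument must be organized around these support-based cases rather than by a single differentiability computation.
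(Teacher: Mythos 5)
Your proof is correct, and it genuinely differs from the paper's in its first half while coinciding in substance in its second. For convergence, you simply invoke finite termination (Theorem \ref{thm: n steps}): the nested supports stabilize at some $S^{M+1}=S^M$ (the empty set being a trivial instance), the iterates are constant from $x^{M+1}$ onward, and the stationary pair satisfies exactly the fixed-point conditions \eqref{eq: fixed point}. The paper instead re-proves convergence analytically: it uses injectivity of $A$ (smallest eigenvalue $\lambda_0$ of $A^TA$) together with the orthogonality relation \eqref{eq: Ax-b triangle inequality} to obtain the telescoping bound $\sum_k \|x^{k+1}-x^k\|_2^2 \le \|b\|_2^2/\lambda_0$, and then rules out an index leaving the support via a contradiction with $\epsilon=\lambda/3$. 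Your route is shorter and reuses structure already established; the paper's is heavier but quantitative, does not lean on the combinatorial nesting of the $S^k$, and so would survive variants of the scheme in which supports are not monotone. For local minimality, your direct expansion $F(x^*+h)-F(x^*)=2\langle h, A^T(Ax^*-b)\rangle+\|Ah\|_2^2+\lambda^2\left(\|x^*+h\|_0-\|x^*\|_0\right)$ is exactly what the paper's surrogate computation reduces to at a fixed point (since $G(x^*+z,x^*)=F(x^*+z)-\|Az\|_2^2+\|z\|_2^2$), and both arguments proceed identically in outline: kill the cross term on $S^*$ using \eqref{eq: fp ob support set}, forbid annihilation of support entries (your radius $\min_{j\in\supp(x^*)}|x^*_j|$ versus the paper's $\lambda$; equivalent, since $|x^*_j|\ge\lambda$ there), and let the $\lambda^2$ penalty jump dominate the off-support linear term. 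The one real difference is bookkeeping: you control the cross term in aggregate via Cauchy--Schwarz, spending a single $\lambda^2$ against $2\|h\|_2\,\|A^T(Ax^*-b)\|_2$, whereas the paper matches each activated coordinate's $\lambda^2$ gain against its own term $2|a_j^T(Ax^*-b)z_j|$, which yields a slightly larger neighborhood (measured in $\ell^\infty$, with radius $\lambda^2\min_j \frac{1}{2|a_j^T(Ax^*-b)|}$) when several coordinates are activated at once; your explicit caveat for a vanishing residual correlation is also handled more cleanly than the paper's implicit convention. Your separate treatment of the case $\supp(h)\subseteq\supp(x^*)$, where restricted least-squares optimality gives $F(x^*+h)\ge F(x^*)$ outright, is a clean simplification that the paper folds into the same computation.
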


\begin{proof}
We first observe that
\begin{align}
\|Ax^k-b\|_2\le\|b\|_2 \label{eq: Ax-b bound}
\end{align}
for all $k\in\N$. This is an immediate consequence of Equation \eqref{eq: update xk}:
\begin{align*}
\|Ax^k-b\|_2 = \min_{x\in\R^n: \supp(x)\subseteq S^{k-1}} \|Ax-b\|_2 \le \|b\|_2,
\end{align*}
since the zero vector is in the feasible set. Next, we show that:
\begin{align}
\sum_{k=1}^\infty \|x^{k+1}-x^k\|_2^2 < \infty. \label{eq: xk summability}
\end{align}
Denote the smallest eigenvalue of $A^TA$ by $\lambda_0$. The assumption that $A$ has full column rank implies that $\lambda_0>0$. Thus, by the coercivity of $A^TA$, 
\begin{align}
\|x^{k+1}-x^k\|_2^2 \le \dfrac{1}{\lambda_0} \|A(x^{k+1}-x^k)\|_2^2. \label{eq: coercivity aa}
\end{align}
for all $k\in\N$.
For $k\in\N$, define subsets $W^k,V^k\subseteq\R^m$ by:
\begin{align*}
W^k&:= \{Ax: x\in\R^n, \ \ \supp(x)\subseteq S^k\}, \\
V^k &:= \{r\in\R^m: \langle r,y\rangle = 0 \ \ \forall y\in W^k\} = (W^k)^\perp .
\end{align*}
Fixing $M\in\N$ and $k\in[M]$ and setting $r:=b-Ax^k$, we have $(A^Tr)_{S^{k-1}}=0$ by Equation \eqref{eq: A^TAx=A^Tb on S}. For $x\in\R^n$ with $\supp(x)\subseteq S^{k-1}$, we have $\langle r,Ax \rangle = \langle A^Tr,x \rangle = 0$, which implies that $r\in V^{k-1}$. In addition, $A(x-x^k)\in W^{k-1}$ for all $x\in\R^n$ with $\supp(x)\subseteq S^{k-1}$. Thus,
\begin{align}
\|Ax-b\|_2^2
&= \|A(x-x^k)\|_2^2 - 2 \langle A(x-x^k), r \rangle + \|Ax^k-b\|_2^2 = \|A(x-x^k)\|_2^2 + \|Ax^k-b\|_2^2 \label{eq: Ax-b triangle inequality}
\end{align}
for all $x\in\R^n$ with $\supp(x)\subseteq S^{k-1}$. By Equations \eqref{eq: nested sk} and \eqref{eq: Ax-b triangle inequality},
\begin{align}
\|A(x^{k+1}-x^k)\|_2^2 = \|Ax^{k+1}-b\|_2^2 - \|Ax^k-b\|_2^2 \label{eq: axk difference}
\end{align}
for $k\in[M]$. Combining Equations \eqref{eq: Ax-b bound}, \eqref{eq: coercivity aa}, and \eqref{eq: axk difference} yields:
\begin{align*}
\sum_{k=1}^M \|x^{k+1}-x^k\|_2^2 &\le \dfrac{1}{\lambda_0}  \sum_{k=1}^M \|A(x^{k+1}-x^k)\|_2^2 \\
&= \dfrac{1}{\lambda_0}  \sum_{k=1}^M \left( \|Ax^{k+1}-b\|_2^2 - \|Ax^k-b\|_2^2 \right)\\
&\le \dfrac{1}{\lambda_0} \ \|Ax^{M+1}-b\|_2^2 \le \dfrac{1}{\lambda_0} \|b\|_2^2,
\end{align*}
and Equation \eqref{eq: xk summability} follows by sending $M\to\infty$. 

We now show that the iterates $x^k$ converge to a fixed point of the scheme. Since $\|x^{k+1}-x^{k}\|_2 \to 0$ as $k\to\infty$, for any $\epsilon>0$, there exists an integer $N\in\N$ such that $\|x^{k+1}-x^{k}\| <\epsilon$ for all $k\ge N$. Assume to the contrary that the scheme does not converge. Then there exists an integer $K\ge N$ such that $S^{K} \backslash S^{K+1}\ne\emptyset$. Thus, we can find an index $j\in S^{K} \backslash S^{K+1}$. By Equation \eqref{eq: iterative scheme}, we must have $|x_j^K|\geq \lambda$, $|x_j^{K+1}|< \lambda$, and $x_j^{K+2}= 0$. Thus,
\begin{align*}
\lambda-|x_j^{K+1}| \le |x_j^{K}-x_j^{K+1}| \leq \|x^{K+1}-x^{K}\|_2  < \epsilon,
\end{align*}
and
\begin{align*}
|x_j^{K+1}| = |x_j^{K+1}-x_j^{K+2}| \leq \|x^{K+2}-x^{K+1}\|_2  < \epsilon.
\end{align*}
The two conditions on $|x_j^{K+1}|$ above implies that $\lambda-\epsilon< |x_j^{K+1}| < \epsilon$, which fails when, for example, $\epsilon = \lambda/3$. Therefore, the iterates $x^k$ converges.
In particular, the preceding argument indicates that there exists an integer $N\in\N$ such that $S^k\backslash S^{k+1}=\emptyset$ for all $k\ge N$. Since the sets $S^k$ are nested, we conclude that $S^{k+1}=S^k$ for all $k\ge N$. Therefore, the iterates $x^k$ converge to a fixed point of the scheme defined by Equation \eqref{eq: iterative scheme}.

We now show that a fixed point of the scheme is a local minimizer of the objective function defined by Equation \eqref{eq: objective function}. Let $x^*$ be a fixed point of the scheme. Then $x^*$ and the set $S^*:=\supp(x^*)$ satisfy:
\begin{align} \label{eq: fixed point}
S^* = \{j\in[n]: |x_j^*| \ge \lambda \} \quad \text{and} \quad x^* = \argmin{\supp(x)\subseteq S^*} \|Ax-b\|_2.
\end{align}
From Equation \eqref{eq: fixed point}, we observe that:
\begin{align}
(A^T(Ax^*-b))_{S^*} = 0, \label{eq: fp ob support set} 
\end{align}
and
\begin{align}
x^*_j\ne0 \iff |x^*_j|\ge\lambda. \label{eq: fp ob threshold}
\end{align}
To show that $x^*$ is a local minimizer of $F$, we will find a positive real number $\epsilon>0$ such that
\begin{align}
F(x^*+z) \ge F(x) \quad \text{for all } z\in\R^n \text{ with } \|z\|_\infty<\epsilon. \label{eq: local min}
\end{align}
Let $U\subseteq [n]$ be the complement of the support set of $x^*$:
\begin{align*}
U := \{j\in[n]: x^*_j=0\}.
\end{align*}
Then by Equation \eqref{eq: fp ob threshold},
\begin{align}
\bar{U} = \supp(x^*) = \{j\in[n]: x^*_j\ne0\} = \{j\in[n]: |x^*_j|\ge\lambda\} = S^*. \label{eq: u-bar s-star}
\end{align}
Fixing $z\in\R^n$, from Equation \eqref{eq: surrogate function}, we have:
\begin{align*}
G(x^*+z,x^*) - G(x^*,x^*) = 2 \langle Az, Ax^*-b \rangle +  \lambda^2 \left( \|x^*+z\|_0 - \|x^*\|_0 \right) + \|z\|_2^2,
\end{align*}
Let $a_j$ be the $j$-th column of $A$, then:
\begin{align}
&2 \langle Az, Ax^*-b \rangle +  \lambda^2 \left( \|x^*+z\|_0 - \|x^*\|_0 \right) \nonumber \\
&\quad= \sum_{j\in U} \left(2a_j^T(Ax^*-b)\, z_j + \lambda^2 |z_j|^0\right) + \sum_{j\in \bar{U}} \left(2a_j^T(Ax^*-b)\, z_j + \lambda^2 (|x^*_j+ z_j|^0 - |x^*_j|^0)\right) \nonumber\\
&\quad= \sum_{j\in U} \left(2a_j^T(Ax^*-b)\, z_j + \lambda^2 |z_j|^0\right) + \sum_{j\in \bar{U}}\lambda^2 (|x^*_j+ z_j|^0 - |x^*_j|^0),
\label{eq:sums}
\end{align}
where the last step follows from Equation \eqref{eq: fp ob support set}. To find an $\epsilon>0$ such that Equation \eqref{eq: local min} holds, we will show that Equation~\eqref{eq:sums} is non-negative (so that the difference in $G$ is bounded below by $\|z\|_2^2$).

 For $j\in\bar{U}$, we have $|x^*_j|\ge\lambda$ by Equation \eqref{eq: u-bar s-star}. If $|z_j|<\lambda$ for $j\in \bar{U}$, then $x^*_j+ z_j\ne0$, and thus $|x^*_j+ z_j|^0 - |x^*_j|^0 =0$. Therefore, provided that $|z_j|<\lambda$ for all $j\in \bar{U}$,
\begin{align*}
2 \langle Az, Ax^*-b \rangle +  \lambda^2 \left( \|x^*+z\|_0 - \|x^*\|_0 \right) = \sum_{j\in U} \left(2a_j^T(Ax^*-b)z_j + \lambda^2 |z_j|^0\right).
\end{align*}
For $j\in U$, consider the following two cases. If $z_j = 0$, then the term in the sum is zero:
\begin{align*}
2a_j^T(Ax^*-b)z_j + \lambda^2 |z_j|^0 = 0.
\end{align*}
If $|z_j| > 0$ and $\lambda^2 \ge 2|a_j^T(Ax^*-b)z_j|$, then,
\begin{align*}
2a_j^T(Ax^*-b)z_j + \lambda^2 |z_j|^0 = 2a_j^T(Ax^*-b)z_j + \lambda^2 \ge 0.
\end{align*} 
Combining these results: if $\epsilon$  satisfies,
\begin{align*}
0<\epsilon \le \lambda^2\min\left\{\, \min_{j\in[n]}\dfrac{1}{2|a_j^T(Ax^*-b)|}\, ,\ \ 1 \right\}.
\end{align*}
then for any $z\in\R^n$ with $\|z\|_\infty<\epsilon$, we have:\begin{align*}
G(x^*+z,x^*) - G(x^*,x^*) \ge \|z\|_2^2,
\end{align*}
which then implies that:
\begin{align*}
F(x^*+z) &= G(x^*+z,x^*) + \|Az\|_2^2 - \|z\|_2^2 \ge G(x^*+z,x^*) - \|z\|_2^2 \ge G(x^*,x^*) = F(x^*),
\end{align*}
and the proof is complete.
\end{proof}

We state a sufficient condition for global minimizers of the objective function in the following theorem.

%%%%%

\begin{theorem} \label{thm: Tropp06}
{\rm (Theorem 12 from \cite{Tropp06})} 
Let $x^g$ be a global minimizer of the objective function. Define $U_g := \{j\in[n]: x^g_j=0\}$. Then,
\begin{subequations}\label{eq: global min condition}
\begin{align}
|a_j^T(Ax^g-b)|\le\lambda \quad &\text{for all } j\in U_g, \label{eq: global min condition1} \\
|x^g_j|\ge\lambda \text{ and } a_j^T(Ax^g-b)=0 \quad &\text{for all } j\in \bar{U}_g. \label{eq: global min condition2}
\end{align}
\end{subequations}
\end{theorem}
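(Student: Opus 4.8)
The plan is to derive all three conditions as necessary consequences of global optimality by perturbing $x^g$ one coordinate at a time and comparing objective values; throughout I would use the standing normalization $\|A\|_2=1$, which forces $\|a_j\|_2=\|Ae_j\|_2\le1$ for every column. Write $S:=\bar{U}_g=\supp(x^g)$ and $s:=\card(S)=\|x^g\|_0$, and abbreviate the residual correlation $g_j:=a_j^T(Ax^g-b)$. I would first establish the orthogonality relation $g_j=0$ for $j\in\bar{U}_g$, i.e. the second part of \eqref{eq: global min condition2}. For any $x$ with $\supp(x)\subseteq S$ one has $\|x\|_0\le s$, so global optimality $F(x^g)\le F(x)$ reduces to $\|Ax^g-b\|_2^2\le\|Ax-b\|_2^2$; hence $x^g$ solves the least-squares problem restricted to the subspace of vectors supported on $S$, and the normal equations $(A^T(Ax^g-b))_S=0$ give $g_j=0$ for all $j\in S$, exactly as in Equation \eqref{eq: A^TAx=A^Tb on S}.

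Next I would prove $|x^g_j|\ge\lambda$ on the support by contradiction. If some $j\in S$ had $0<|x^g_j|<\lambda$, I would set that coordinate to zero, forming $\tilde{x}:=x^g-x^g_je_j$ with $\|\tilde{x}\|_0=s-1$. Expanding $\|A\tilde{x}-b\|_2^2$ and invoking $g_j=0$ from the previous step to cancel the cross term yields $F(\tilde{x})-F(x^g)=(x^g_j)^2\|a_j\|_2^2-\lambda^2$. Since $\|a_j\|_2\le1$ and $|x^g_j|<\lambda$, this difference is strictly negative, contradicting global minimality; hence $|x^g_j|\ge\lambda$, completing \eqref{eq: global min condition2}.

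Finally I would prove the off-support bound $|g_j|\le\lambda$ of \eqref{eq: global min condition1}. Fixing $j\in U_g$ and a scalar $t$, the vector $\hat{x}:=x^g+te_j$ has $\|\hat{x}\|_0=s+1$ for $t\ne0$ and satisfies $F(\hat{x})-F(x^g)=2t\,g_j+t^2\|a_j\|_2^2+\lambda^2$. Minimizing this quadratic in $t$ at $t=-g_j/\|a_j\|_2^2$ gives minimal value $\lambda^2-g_j^2/\|a_j\|_2^2$, and global optimality forces it to be nonnegative, so $|g_j|\le\lambda\|a_j\|_2\le\lambda$.

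There is no single hard obstacle here; the argument is a sequence of one-variable optimizations, and the result is classical (Tropp's Theorem 12). The point demanding care is the bookkeeping of the $\ell^0$ term: each perturbation must genuinely change $\|x\|_0$ by exactly the intended $\pm1$ (removing a truly nonzero coordinate, or activating a truly zero one), so that the penalty contributes precisely $\mp\lambda^2$ and no more. The orthogonality $g_j=0$ on the support, established first, is what makes the coordinate-removal computation collapse cleanly, so the ordering of the three steps is essential.
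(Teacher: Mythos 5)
Your proof is correct, but there is nothing in the paper to compare it against: the paper states this result without proof, importing it verbatim as Theorem 12 of \cite{Tropp06}, and uses it only to deduce Corollary \ref{cor: global min}. Your argument is the standard one (essentially the classical proof), and all three perturbation computations check out. In your first step, for any $x$ with $\supp(x)\subseteq S$ one has $\lambda^2\|x\|_0\le\lambda^2 s$, so global optimality $F(x^g)\le F(x)$ does collapse to $\|Ax^g-b\|_2\le\|Ax-b\|_2$ on that subspace, and the normal equations give $g_j=0$ on $S$. In your second step the cross term is killed by the first step, and $(x^g_j)^2\|a_j\|_2^2-\lambda^2<0$ follows from $\|a_j\|_2=\|Ae_j\|_2\le\|A\|_2=1$ together with the strict inequality $|x^g_j|<\lambda$; your deletion genuinely lowers $\|x\|_0$ by exactly one because $x^g_j\ne0$, as you flag. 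In your third step, the minimizing $t=-g_j/\|a_j\|_2^2$ is nonzero whenever $g_j\ne0$ (so the support genuinely grows by one), and $a_j\ne0$ is guaranteed by $\rank(A)=n$; the resulting bound $|g_j|\le\lambda\|a_j\|_2\le\lambda$ is in fact marginally stronger than \eqref{eq: global min condition1}. One small correction to your closing remark: the ordering is essential only between the first two steps; the off-support bound never uses $g_j=0$ on the support and could be established first. Finally, your proof makes transparent why Corollary \ref{cor: global min} follows: the conditions \eqref{eq: global min condition2}, together with $x^g_j=0$ on $U_g$, are exactly the fixed-point conditions of Equation \eqref{eq: fixed point}, while \eqref{eq: global min condition1} is extra information that a mere fixed point need not satisfy.
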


%%%%%

Theorems \ref{thm: fixed point} and \ref{thm: Tropp06} immediately imply the following result.

\begin{corollary} \label{cor: global min}
A global minimizer of the objective function defined by Equation \eqref{eq: objective function} is a fixed point of the iterative scheme defined by Equation \eqref{eq: iterative scheme}.
\end{corollary}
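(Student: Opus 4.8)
The plan is to verify that any global minimizer satisfies the characterization of a fixed point extracted in the proof of Theorem~\ref{thm: fixed point}, namely the pair of conditions in Equation~\eqref{eq: fixed point}. The necessary conditions supplied by Theorem~\ref{thm: Tropp06} turn out to encode precisely these two requirements, so the argument is essentially a translation between the two characterizations. Let $x^g$ be a global minimizer of $F$, let $U_g := \{j\in[n]: x^g_j=0\}$ as in Theorem~\ref{thm: Tropp06}, and set $S^g := \supp(x^g) = \bar{U}_g$.

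First I would establish the thresholding condition $S^g = \{j\in[n]: |x^g_j|\ge\lambda\}$. For $j\in\bar{U}_g$, Equation~\eqref{eq: global min condition2} gives $|x^g_j|\ge\lambda$, while for $j\in U_g$ we have $x^g_j=0$, hence $|x^g_j|=0<\lambda$. Therefore the set of indices where $|x^g_j|\ge\lambda$ is exactly $\bar{U}_g=S^g$, which is the first half of Equation~\eqref{eq: fixed point}.

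Next I would verify the least-squares condition $x^g = \argmin{\supp(x)\subseteq S^g}\|Ax-b\|_2$. Equation~\eqref{eq: global min condition2} also yields $a_j^T(Ax^g-b)=0$ for every $j\in\bar{U}_g=S^g$, that is, $(A^T(Ax^g-b))_{S^g}=0$. This is exactly the normal-equation (first-order optimality) condition for the reduced least-squares problem over the support $S^g$, compare Equation~\eqref{eq: A^TAx=A^Tb on S}. Since $A$ has full column rank, the submatrix $A_{S^g}$ is injective and the reduced problem has a unique solution; hence $x^g$ coincides with that minimizer, giving the second half of Equation~\eqref{eq: fixed point}. Both conditions of Equation~\eqref{eq: fixed point} hold, so $x^g$ is a fixed point of the scheme.

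The argument is short, and the only point demanding care --- the main (minor) obstacle --- is the passage from the componentwise stationarity $(A^T(Ax^g-b))_{S^g}=0$ to the statement that $x^g$ is the argmin of the reduced least-squares problem. This step relies on uniqueness of the reduced solution, which is guaranteed by the standing full-column-rank assumption on $A$. I would note that condition~\eqref{eq: global min condition1} on $U_g$ plays no role here, as it is not part of the fixed-point characterization once the support and thresholding sets are shown to agree.
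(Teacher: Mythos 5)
Your proof is correct and follows exactly the route the paper intends: the paper dismisses this corollary as an immediate consequence of Theorems~\ref{thm: fixed point} and~\ref{thm: Tropp06}, and what you have written is precisely that implication spelled out --- Tropp's conditions~\eqref{eq: global min condition} deliver the two halves of the fixed-point characterization~\eqref{eq: fixed point}. Your added care about passing from the normal equations $(A^T(Ax^g-b))_{S^g}=0$ to the argmin statement via full column rank, and your observation that condition~\eqref{eq: global min condition1} is not needed, are both sound fillings-in of details the paper leaves implicit.
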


%%%%%

Theorem~\ref{thm: fixed point} shows that the iterative scheme converges to a local minimizer of the objective function, but it does not imply that the iterative scheme can obtain all local minima. However, by Corollary \ref{cor: global min}, the global minimizer is indeed obtainable. The following proposition provides a necessary and sufficient condition that the scheme terminates in one step, which is a consequence of Corollary \ref{cor: global min}.

%%%%%

\begin{proposition} \label{prop: one step condition}
Let $x^*\in\R^n$ be a vector which satisfies $Ax^*=b$ and $|x^*_j|\ge\lambda$ on $S:=\supp(x^*)$. A necessary and sufficient condition that $x^*$ can be recovered using the iterative scheme defined by Equation \eqref{eq: iterative scheme} in one step is:
\begin{align}
\min_{j\in S}|(A^\dagger b)_j| \ge \lambda > \max_{j\in \bar{S}}|(A^\dagger b)_j|. \label{eq: one step condition} 
\end{align}
\end{proposition}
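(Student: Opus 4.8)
The plan is to reduce the statement to a single set equality, namely $S^0 = S$, where $S^0 = \{j\in[n] : |(A^\dagger b)_j|\ge\lambda\}$ is the support selected at the very first thresholding step (recall $x^0 = A^\dagger b$ by Equation~\eqref{eq: update x0}). First I would fix the meaning of \emph{recovered in one step}: it should mean that $x^1 = x^*$ together with the stopping criterion of Remark~\ref{rem: stopping criterion} being met at the first iterate, i.e. $S^1 = S^0$, which is the natural notion of termination for the scheme. The key preliminary observation is that the two inequalities in Equation~\eqref{eq: one step condition} are exactly two set inclusions: $\min_{j\in S}|(A^\dagger b)_j|\ge\lambda$ is equivalent to $S\subseteq S^0$, while $\lambda > \max_{j\in\bar{S}}|(A^\dagger b)_j|$ is equivalent to $S^0\cap\bar{S} = \emptyset$, i.e. $S^0\subseteq S$. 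Hence Equation~\eqref{eq: one step condition} holds if and only if $S^0 = S$, and the strictness of the upper bound matches the non-strict threshold defining $S^0$.

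For sufficiency, assume $S^0 = S$. Then by Equation~\eqref{eq: update xk}, $x^1 = \argmin{\supp(x)\subseteq S}\|Ax-b\|_2$. Since $Ax^* = b$ with $\supp(x^*) = S$, and since $\rank(A) = n$ forces $A_S$ to be injective, this restricted least-squares problem attains zero residual at $x^*$ and has a unique minimizer, so $x^1 = x^*$. Computing the next support gives $S^1 = \{j : |x^*_j|\ge\lambda\} = S = S^0$, so the scheme has reached the fixed point $x^*$ (cf. the characterization in Equation~\eqref{eq: fixed point}) after one step. For necessity, suppose $x^*$ is recovered in one step, so $x^1 = x^*$ and $S^1 = S^0$. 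From $\supp(x^1)\subseteq S^0$ (Equation~\eqref{eq: nested sk}) and $\supp(x^1) = \supp(x^*) = S$ I get $S\subseteq S^0$, which is the left inequality. Moreover $S^1 = \{j : |x^*_j|\ge\lambda\} = S$, so $S^1 = S^0$ yields $S^0 = S$, and in particular $S^0\subseteq S$, which is the right inequality.

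The main subtlety is pinning down the correct notion of ``one step'' and seeing why both inequalities are genuinely needed. The left inequality alone guarantees $S\subseteq S^0$, and this already forces $x^1 = x^*$: even when $S^0$ strictly contains $S$, the vector $(A_{S^0})^\dagger b$ is the unique element with support in $S^0$ solving $A_{S^0}y = b$, namely $x^*_{S^0}$, which vanishes on $S^0\setminus S$. Thus correctness of the first least-squares solve does not by itself distinguish $S^0 = S$ from $S\subsetneq S^0$; the strict upper bound $\lambda > \max_{j\in\bar{S}}|(A^\dagger b)_j|$ is exactly what rules out an over-sized initial support and guarantees the support is identified precisely at step zero, so that the scheme terminates immediately rather than requiring a further support-shrinking iteration. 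I expect the bulk of the care to go into making the equivalence between the threshold inequalities and $S^0 = S$ airtight, while the least-squares uniqueness step is routine given $\rank(A) = n$.
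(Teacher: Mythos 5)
Your proposal is correct and follows essentially the same route as the paper: both reduce Equation~\eqref{eq: one step condition} to the set equality $S^0=S$, prove necessity via the stopping criterion $S^1=S^0$ together with $S^1=\{j:|x^*_j|\ge\lambda\}=S$, and prove sufficiency by noting that $Ax^*=b$ gives zero residual on the restricted least-squares problem, whose minimizer is unique by injectivity of $A$. Your added remark --- that $S\subsetneq S^0$ still yields $x^1=x^*$, so the strict right-hand inequality is needed precisely to force termination (rather than correctness of the iterate) at step one --- makes explicit a point the paper handles only implicitly by invoking Remark~\ref{rem: stopping criterion}.
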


\begin{proof} 
First, observe from the definitions of $x^0$ and $S^0$ that
\begin{align*}
S^0 = S \iff \{j\in[n]: |(A^\dagger b)_j| \ge \lambda\} = S \iff \min_{j\in S}|(A^\dagger b)_j| \ge \lambda > \max_{j\in \bar{S}}|(A^\dagger b)_j|.
\end{align*}

Assume that $x^*$ can be recovered via the scheme in one step, {\it i.e.}, $x^1=x^*$. By the definition of $S^1$, it follows that
\begin{align*}
S^1 := \{j\in[n]: |x_j^1| \ge \lambda\} = \{j\in[n]: |x^*_j| \ge \lambda\} = S.
\end{align*} 
By the stopping criterion (see Remark \ref{rem: stopping criterion}), we have $S^1=S^0$. Thus, $S^0=S$, which implies Equation \eqref{eq: one step condition}.

Assume that Equation \eqref{eq: one step condition} holds, {\it i.e.}, $S^0=S$. The assumption that $Ax^*=b$ implies that:
\begin{align*}
 \|Ax^*-b\|_2 = \min_{x\in\R^n:\supp(x)\subseteq S}\|Ax-b\|_2 \end{align*}
 since $\supp(x^*) \in S$ and the norm is zero. Since $A$ is injective, we have uniqueness and,
\begin{align*}
x^* = \argmin{x\in\R^n:\supp(x)\subseteq S}\|Ax-b\|_2= \argmin{x\in\R^n:\supp(x)\subseteq S^0}\|Ax-b\|_2 = x^1,
\end{align*}
{\it i.e.} $x^*$ can recovered via the scheme in one step.
\end{proof} 

%%%%%

We summarize all of the convergence results in the following theorem. The algorithm proposed in \cite{BruntonProctorKutz16PNAS} is summarized in Algorithm \ref{alg: iterative scheme}.

%%%%%

\begin{theorem} \label{thm: summary}
Assume that $m\ge n$. Let $A\in\R^{m\times n}$ with $\|A\|_2=1$, $b\in\R^m$, and $\lambda>0$. Let $x^k$ be the sequence generated by Equation \eqref{eq: iterative scheme}. Define the objective function $F$ by Equation \eqref{eq: objective function}. We have:
\begin{enumerate}[(i)]
\item $x^k$ converges to a fixed point of the iterative scheme defined by Equation \eqref{eq: iterative scheme} in at most $n$ steps;
\item a fixed point of the scheme is a local minimizer of $F$;
\item a global minimizer of $F$ is a fixed point of the scheme;
\item $x^k$ strictly decreases $F$ unless the iterates are stationary.
\end{enumerate}
\end{theorem}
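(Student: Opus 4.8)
The plan is to recognize that Theorem~\ref{thm: summary} is a consolidation theorem: each of its four assertions has already been established individually in the preceding results, so the proof reduces to assembling the correct citations and checking that the standing hypotheses line up. I would first verify that the blanket assumptions stated here --- $m \ge n$, $\|A\|_2 = 1$, and $\lambda > 0$, together with the full column rank condition $\rank(A) = n$ carried over as a standing assumption of Section~\ref{sec: analysis} --- are exactly the conditions under which each of the cited results was derived. This is the only genuine point of care, since Theorems~\ref{thm: energy decreases} and \ref{thm: fixed point} were proved under the normalization $\|A\|_2 = 1$ and injectivity of $A$, which match the hypotheses restated in the summary verbatim.

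With the hypotheses confirmed, I would dispatch the four items in turn. For item (iv), I would simply invoke Theorem~\ref{thm: energy decreases}, which asserts precisely that the iterates $x^k$ strictly decrease $F$ unless they are stationary; no further work is required. For item (i), I would combine two earlier facts: Theorem~\ref{thm: fixed point} guarantees that $x^k$ converges to a fixed point of the scheme, while Theorem~\ref{thm: n steps} shows the scheme terminates in at most $n$ steps, and together these yield convergence to a fixed point within $n$ iterations. For item (ii), I would cite the second assertion of Theorem~\ref{thm: fixed point}, namely that every fixed point of the scheme is a local minimizer of $F$. Finally, for item (iii), I would invoke Corollary~\ref{cor: global min}, which states exactly that any global minimizer of $F$ is a fixed point of the scheme.

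Since every constituent claim is already fully proved, there is no substantive obstacle in this theorem; the difficulty is purely organizational. The main thing to watch is that the phrase ``in at most $n$ steps'' in item (i) is an \emph{a priori} bound on termination (from the nestedness argument of Theorem~\ref{thm: n steps}) rather than a consequence of the summability estimate used in Theorem~\ref{thm: fixed point} to establish convergence, so I would present the two as complementary: Theorem~\ref{thm: fixed point} identifies the limit as a fixed point, and Theorem~\ref{thm: n steps} certifies that this limit is reached in finitely many steps. Assembling these observations completes the proof.
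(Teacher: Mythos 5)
Your proposal is correct and matches the paper exactly: Theorem~\ref{thm: summary} is stated there as a summary with no separate proof, its four items being precisely Theorems~\ref{thm: n steps} and~\ref{thm: fixed point} (for item (i)), the second assertion of Theorem~\ref{thm: fixed point} (item (ii)), Corollary~\ref{cor: global min} (item (iii)), and Theorem~\ref{thm: energy decreases} (item (iv)). Your additional care in checking that the standing hypotheses ($\rank(A)=n$ from Section~\ref{sec: analysis} together with $\|A\|_2=1$) carry over to each cited result is exactly the right organizational point, and nothing further is needed.
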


%%%%%

\begin{algorithm} [t!]
\caption{The SINDy algorithm \cite{BruntonProctorKutz16PNAS} for $Ax=b$}
\label{alg: iterative scheme}
\begin{algorithmic}[1]
\Require $m\ge n$; $A\in\R^{m\times n}$ with $\rank(A)=n$; $b\in\R^m$.
\State Set $k=0$; Initialize $x^0 = A^\dagger b$ and $S^{-1}=\emptyset$
\State Set $S^k = \{j\in[n]: |x_j^k| \ge \lambda \}$; Choose $\lambda>0$ such that $S^0\ne\emptyset$; 
\While{$S^k\ne S^{k-1}$}
\State $x^{k+1} = \text{argmin} \|Ax-b\|_2$ such that $\supp(x)\subseteq S^k$;
\State $S^{k+1} = \{j\in[n]: |x_j^{k+1}| \ge \lambda \}$;
\State $k=k+1$;
\EndWhile
\State \Return $x^k$.
\end{algorithmic}
\end{algorithm}

%%%%%

The preceding convergence analysis for Algorithm \ref{alg: iterative scheme} can be readily adapted to a variety of SINDy based algorithms. For example, in \cite{RudyBruntonProctorKutz17}, the authors proposed the Sequential Threshold Ridge regression (STRidge) algorithm, to find a sparse approximation of the solution of $Ax=b$. Instead of minimizing the function $F$, the STRidge algorithm minimizes the following objective function:
\begin{align}
F_1(x) := \|Ax-b\|_2^2 + \gamma\|x\|_2^2 + \lambda^2\|x\|_0, \quad x\in\R^n \label{eq: ridge objective function}
\end{align}
by iterating:
\begin{subequations} \label{eq: ridge iterative scheme}
\begin{align}
x^0 &= A^\dagger b, \\
S^k &= \{j\in[n]: |x_j^k| \ge \lambda \}, k\ge0 \\
x^{k+1} &= \argmin{x\in\R^n: \supp(x)\subseteq S^k} \|Ax-b\|_2^2 + \gamma\|x\|_2^2. 
\end{align}
\end{subequations}
We assume that the parameter $\gamma>0$ is fixed. By defining:
\begin{align}
\tilde A := \begin{pmatrix} A \\ \gamma I \end{pmatrix} \in\R^{(m+n)\times n}, \quad \tilde b := \begin{pmatrix} b \\ 0 \end{pmatrix} \in\R^{m+n}, \label{eq: tilde A and b}
\end{align}
then $F_1(x) = \|\tilde Ax-\tilde b\|_2^2 + \lambda^2\|x\|_0$ is equivalent to the objective function of Algorithm \ref{alg: iterative scheme} with $\tilde A$ and $\tilde b$. We then obtain the following corollary.

%%%%%

\begin{corollary} \label{cor: summary}
Let $A\in\R^{m\times n}$, $b\in\R^m$ and $\gamma,\lambda>0$. Assume that $\|\tilde A\|_2=1$, where $\tilde A$ is defined by Equation \eqref{eq: tilde A and b}. Let $x^k$ be the sequence generated by Equation \eqref{eq: ridge iterative scheme}. Define the objective function $F_1$ by Equation \eqref{eq: ridge objective function}. We have:
\begin{enumerate}[(i)]
\item the iterates $x^k$ converge to a fixed point of the iterative scheme defined by Equation \eqref{eq: ridge iterative scheme};
\item a fixed point of the scheme is a local minimizer of $F_1$;
\item a global minimizer of $F_1$ is a fixed point of the scheme;
\item the iterates $x^k$ strictly decrease $F_1$ unless the iterates are stationary.
\end{enumerate}
\end{corollary}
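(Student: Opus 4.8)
The plan is to recognize the corollary as a direct instance of Theorem~\ref{thm: summary} applied to the augmented pair $(\tilde A, \tilde b)$ from Equation~\eqref{eq: tilde A and b}, so that essentially no fresh analysis is needed beyond checking that the hypotheses of that theorem hold for $\tilde A$. As recorded just before the statement, the augmentation gives $F_1(x) = \|\tilde A x - \tilde b\|_2^2 + \lambda^2\|x\|_0$, which is exactly the objective $F$ of Equation~\eqref{eq: objective function} with $(A,b)$ replaced by $(\tilde A, \tilde b)$; I would take this identity as given.

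First I would verify that $\tilde A$ meets the standing assumptions of Theorem~\ref{thm: summary}. The normalization $\|\tilde A\|_2 = 1$ is assumed in the statement. Injectivity is automatic: the lower block $\gamma I$ with $\gamma>0$ forces $\tilde A x = 0 \Rightarrow \gamma x = 0 \Rightarrow x = 0$, so $\rank(\tilde A)=n$ and $\tilde A^\dagger = (\tilde A^T\tilde A)^{-1}\tilde A^T = (A^TA + \gamma I)^{-1}A^T$ is well defined. Finally $\tilde A \in \R^{(m+n)\times n}$ has at least as many rows as columns, so the requirement ``$m\ge n$'' holds for the augmented system.

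Next I would check that the STRidge iteration of Equation~\eqref{eq: ridge iterative scheme} is literally the SINDy iteration of Equation~\eqref{eq: iterative scheme} run on $(\tilde A, \tilde b)$. The thresholding step $S^k = \{j\in[n]: |x^k_j|\ge\lambda\}$ is identical in both schemes. For the fitting step, the ridge subproblem $\argmin{\supp(x)\subseteq S^k}\|Ax-b\|_2^2 + \gamma\|x\|_2^2$ agrees with $\argmin{\supp(x)\subseteq S^k}\|\tilde A x - \tilde b\|_2^2$ by the identity above, and minimizing the squared residual is the same as minimizing the residual; hence the two update rules coincide. Consequently the sequence $x^k$ produced by Equation~\eqref{eq: ridge iterative scheme} is precisely the sequence that Theorem~\ref{thm: summary} analyzes for $(\tilde A,\tilde b)$, and conclusions (i)--(iv) transfer verbatim with $F_1$ in place of $F$.

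The one point requiring care, which I expect to be the main (if minor) obstacle, is the initialization: Equation~\eqref{eq: ridge iterative scheme} starts from $x^0 = A^\dagger b$, whereas the SINDy scheme for $(\tilde A,\tilde b)$ would start from the ridge estimate $\tilde A^\dagger \tilde b = (A^TA+\gamma I)^{-1}A^Tb$, and these differ in general. I would argue this is harmless because the proofs of Theorems~\ref{thm: n steps}, \ref{thm: energy decreases}, and \ref{thm: fixed point} never use the specific form of $x^0$: they rely only on the nested supports $S^{k+1}\subseteq\supp(x^{k+1})\subseteq S^k$ and on the restricted normal equations $(\tilde A^T(\tilde A x^{k+1}-\tilde b))_{S^k}=0$, both of which are produced by the update and thresholding rules from any starting iterate (and the bound $\|\tilde A x^k - \tilde b\|_2\le\|\tilde b\|_2 = \|b\|_2$ used for summability holds for every $k\ge 1$ since $0$ is feasible). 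Thus the initialization merely fixes $S^0$, every downstream iterate is a genuine SINDy iterate for $(\tilde A,\tilde b)$, and the entire convergence machinery carries over, completing the proof.
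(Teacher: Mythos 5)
Your proposal is correct and is essentially the paper's own argument: the paper offers no explicit proof of this corollary, only the observation preceding it that the augmentation \eqref{eq: tilde A and b} turns $F_1$ into the objective $F$ for $(\tilde A,\tilde b)$, together with the remark afterward that the $\gamma I$ block makes $\tilde A$ injective so that the standing hypotheses of Theorem~\ref{thm: summary} hold --- exactly the reduction you carry out, in more detail than the paper does. Two small flags. First, your blanket claim that the proofs ``never use the specific form of $x^0$'' is not quite right for conclusion (iv): the descent argument in Theorem~\ref{thm: energy decreases} for the step $x^k\to x^{k+1}$ invokes the normal equations for $x^k$ on $S^{k-1}$, which at $k=0$ amounts to $\tilde A^T(\tilde A x^0-\tilde b)=0$, i.e.\ to $x^0$ being the least-squares solution for the \emph{augmented} system. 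With the literal initialization $x^0=A^\dagger b$ of Equation~\eqref{eq: ridge iterative scheme} one instead gets $\tilde A^T(\tilde b-\tilde A x^0)=-\gamma^2 x^0$, so the cross term in Equation~\eqref{eq: g decreasing 1} becomes $-2\gamma^2\|x^0_{\bar S^0}\|_2^2\le 0$ rather than zero; descent at the first step therefore still holds, but by a sign computation your argument doesn't supply (your reasoning does cover (i)--(iii) for any $x^0$, and (iv) for $k\ge1$). Second, a cosmetic point inherited from the paper: with $\tilde A=\binom{A}{\gamma I}$ one has $\|\tilde Ax-\tilde b\|_2^2=\|Ax-b\|_2^2+\gamma^2\|x\|_2^2$, so matching $F_1$ exactly requires $\sqrt{\gamma}\,I$ in the augmentation (and correspondingly $\tilde A^\dagger\tilde b=(A^TA+\gamma^2 I)^{-1}A^Tb$, not $(A^TA+\gamma I)^{-1}A^Tb$); since you took the paper's identity as given, this is the paper's typo, not a gap in your reduction.
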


%%%%%
Note that we no longer require $A$ to be injective, since concatenation with the identity matrix makes $\tilde{A}$ injection. 
\subsection{Examples and Sharpness}

We construct a few examples to highlight the effects of different choices of $\lambda>0$. In particular, we show that the scheme obtains nontrivial sparse approximations, give an example where the minimizer is obtained in one step, and provide an example in which the maximum number of steps ({\it i.e.}, $n-1$ steps) is required\footnote{The code is available on \url{https://github.com/linanzhang/SINDyConvergenceExamples}.}. In all examples, $A$ is injective.

%%%%%

\begin{example} \label{ex: example1}
Consider a lower-triangular matrix $A\in\R^{5\times5}$ given by:
\begin{align*}
A := \begin{pmatrix}
1 & 0 & 0 & 0 & 0 \\
-0.1 & 0.9 & 0 & 0 & 0 \\
-0.1 & -0.1 & 0.8 & 0 & 0 \\
-0.1 & -0.1 & -0.1 & 0.7 & 0 \\
-0.1 & -0.1 & -0.1 & -0.1 & 0.6
\end{pmatrix}. 
\end{align*}
Let $x,b\in\R^5$ be such that
\begin{align*}
x &:= \begin{pmatrix} 10, 0.95, 0.9, 0.85, 0.8 \end{pmatrix}^T, \\
b &:= Ax = \begin{pmatrix} 10, -0.145, -0.375, -0.59, -0.79\end{pmatrix}^T. 
\end{align*}
We want to obtain a 1-sparse approximation of the solution $x$ from the system $Ax=b$. First, observe that: 
\begin{align*}
\min_{j\in S}|(A^\dagger b)_j| = 10, \quad \max_{j\in \bar{S}}|(A^\dagger b)_j| = 0.95,
\end{align*}
where $S=\{1\}$. Thus by Proposition~\ref{prop: one step condition} choosing $\lambda\in(0.95,10]$ will yield immediate convergence:
\begin{subequations} \label{eq: example1 1step}
\begin{align}
x^0 &= \begin{pmatrix} 10, 0.95, 0.9, 0.85, 0.8\end{pmatrix}^T, & S^0 &= \{1\}, \\
x^1 &= \begin{pmatrix} 9.7981, 0, 0, 0, 0\end{pmatrix}^T, & S^1 &= \{1\}.
\end{align}
\end{subequations}
Indeed, we obtain a 1-sparse approximation of $x$ in one step. Now consider a parameter outside of the optimal range, for example $\lambda = 0.802$. Applying Algorithm \ref{alg: iterative scheme} to the linear system yields:
\begin{subequations} \label{eq: example1 nsteps}
\begin{align}
x^0 &= \begin{pmatrix} 10, 0.95, 0.9, 0.85, 0.8\end{pmatrix}^T, & S^0 &= \{1,2,3,4\}, \\
x^1 &= \begin{pmatrix} 9.9366, 0.8725, 0.8031, 0.7255, 0\end{pmatrix}^T & S^1 &= \{1,2,3\}, \\
x^2 &= \begin{pmatrix} 9.8869, 0.8117, 0.7271, 0, 0\end{pmatrix}^T, & S^2 &= \{1,2\}, \\
x^3 &= \begin{pmatrix} 9.8417, 0.7566, 0, 0, 0\end{pmatrix}^T, & S^3 &= \{1\}, \\
x^4 &= \begin{pmatrix} 9.7981, 0, 0, 0, 0\end{pmatrix}^T, & S^4 &= \{1\}.
\end{align}
\end{subequations}
Therefore, a 1-sparse approximation of $x$ is obtained in four steps, which is the maximum number of iterations Algorithm \ref{alg: iterative scheme} needs in order to obtain a 1-sparse approximation (see Corollary \ref{cor: n-s steps}). \qed
\end{example}

%%%%%

The following examples shows that the iterative scheme, Equation \eqref{eq: iterative scheme}, obtains fixed-points which are not obtainable via direct thresholding. In fact, if we re-order the support sets $S^k$ based on the magnitude of the corresponding components, we observe that the locations of the correct indices will evolve over time. The provides evidence that, in general, iterating the scheme is required.  
   
%%%%%

\begin{example} \label{ex: example2}
Consider the matrix $A\in\R^{10\times10}$ given by:
\begin{align*}
A = \begin{pmatrix}
4 & 5 & 1 & 6 & 8 & 4 & 6 & 6 & 2 & 7 \\
6 & 5 & 7 & 5 & 3 & 3 & 2 & 5 & 9 & 2 \\
1 & 5 & 1 & 7 & 4 & 8 & 1 & 3 & 9 & 7 \\
10 & 2 & 9 & 5 & 5 &10 & 0 & 8 & 1 & 2 \\
9 & 9 & 3 & 9 & 6 & 4 & 3 & 7 & 1 & 4 \\
10 & 1 & 7 & 8 & 7 & 4 &10 & 3 & 3 & 6 \\
2 & 4 & 4 & 5 & 6 & 9 & 1 & 9 & 1 & 9 \\
2 & 5 & 1 & 3 & 6 & 3 &10 & 7 & 2 & 1 \\
1 & 1 & 1 & 3 & 10 & 4 & 4 & 4 & 5 & 1 \\
6 & 5 & 1 & 4 & 2 & 5 & 1 & 5 & 1 & 8 \\
\end{pmatrix}. 
\end{align*}
Let $x,\eta,b\in\R^{10}$ be such that
\begin{align*}
x &:= \begin{pmatrix} 1,1,1,0,0,0,0,0,0,0\end{pmatrix}^T, \\
\eta &:= \begin{pmatrix} 0.23, 0.08, -0.01, -0.02, 0.04, -0.28, -0.32, 0.09, 0.30, 0.63\end{pmatrix}^T, \\
b &:= Ax+\eta = \begin{pmatrix} 10.23, 18.08, 6.99, 20.98, 21.04, 17.72, 9.68, 8.09, 3.30, 12.63\end{pmatrix}^T, 
\end{align*}
where each element of $\eta$ is drawn i.i.d. from the normal distribution $\mathcal{N}(0,0.25)$. We want to recover $x$ from the noisy data $b$ using Algorithm \ref{alg: iterative scheme}. The support set to be recovered is $S:=\{1,2,3\}$.
Setting $\lambda=0.7$ in Algorithm \ref{alg: iterative scheme} yields:
\begin{subequations}\label{eq: example2}
\begin{align}
x^0 &= \begin{pmatrix} \textcolor{blue}{0.88}, \textcolor{blue}{2.83}, \textcolor{blue}{2.04}, -1.60,  0.84, 0.63, 0.13, -1.82, -0.42, 0.26\end{pmatrix}^T, & S^0 &= \{2,3,8,4,1,5\}, \\
x^1 &= \begin{pmatrix} \textcolor{blue}{1.06}, \textcolor{blue}{1.08}, \textcolor{blue}{0.96}, -0.10, 0.04, 0, 0, -0.03, 0, 0\end{pmatrix}^T, & S^1 &= \{2,1,3\}, \\
x^2 &= \begin{pmatrix} \textcolor{blue}{1.04}, \textcolor{blue}{1.01}, \textcolor{blue}{0.94}, 0, 0, 0, 0, 0, 0, 0\end{pmatrix}^T, & S^2 &= \{1,2,3\},
\end{align}
\end{subequations}
where each $S^k$ is re-ordered such that the $j$-th element of $S^k$ is the $j$-th largest (in magnitude) element in $x^{k}$. Note that we have highlighted the desired components in blue.

Several important observations can be made from this example. First, there is no choice of $\lambda$ so that the method converges in one step, since the value of $x_1^0$ is smaller (in magnitude) than two components on $\bar{S}$; however, the method still terminates at the correct support set. Second, setting $\lambda>0.9$ will remove the first component immediately, yielding an incorrect solution.
Lastly, the order of the indices in the support set changes between steps, which shows that the solution $x^k$ is not simply generated by peeling offing the smallest elements of $A^\dagger b$. These observations lead one to conclude that the iterative scheme is more refined than just choosing the most important terms from $A^\dagger b$, \textit{i.e.} the iterations shuffle the components and help to locate the correct components.

 \qed
\end{example}

%%%%%

In the following example, we provide numerical support for Theorem \ref{thm: energy decreases}.

%%%%%

\begin{example}
In Table \ref{tab: objective function}, we list the values of the objective function $F$ for the different experiments, where $F$ is defined by Equation \eqref{eq: objective function}. Recall that in Theorem \ref{thm: energy decreases}, we have assumed that $\|A\|_2=1$. Thus to compute $F(x^k)$ for a given example, one may need to rescale the equation $Ax=b$ by $\|A\|_2$.  It can be seen from Table \ref{tab: objective function} that the value of $F(x^k)$ strictly decreases in $k$. \qed

\begin{table}[b!] 
\caption{The value of the objective function $F$ in different experiments.} 
\label{tab: objective function}
\centering \vskip5pt
\begin{tabular}{c | c | c | c } \toprule
Inputs $A$ and $b$ & Parameter $\lambda$ & Outputs $x^k$ and $S^k$ & $F(x^k)$ \\  \midrule
\multirow{2}{*}{as defined in Example \ref{ex: example1}} & \multirow{2}{*}{8} & \multirow{2}{*}{as given in Equation \eqref{eq: example1 1step}} & $F(x^0)=320.0000$ \\ 
& & & $F(x^1) = 65.2119$ \\ \midrule
\multirow{5}{*}{as defined in Example \ref{ex: example1}} & \multirow{5}{*}{0.802} & \multirow{5}{*}{as given in Equation \eqref{eq: example1 nsteps}} & $F(x^0) = 3.2160$ \\
& & & $F(x^1) = 2.7727$ \\
& & & $F(x^2) = 2.3688$ \\
& & & $F(x^3) = 2.0490$ \\
& & & $F(x^4) = 1.8551$ \\ \midrule 
\multirow{3}{*}{as defined in Example \ref{ex: example2}} & \multirow{3}{*}{0.7} & \multirow{3}{*}{as given in Equation \eqref{eq: example2}} & $F(x^0) = 4.9000$ \\
& & & $F(x^1) = 2.9401$ \\
& & & $F(x^2) = 1.4702$ \\ \bottomrule
\end{tabular}\end{table}

\end{example}

\section{Application: Model Identification of Dynamical Systems}
\label{sec: application}

Let $u$ be an observed dynamic process governed by a first-order system:
\begin{align*}
\dot{u}(t) = f(u(t)),
\end{align*}
where $f$ is an unknown nonlinear equation. One application of the SINDy algorithm is for the recovery (or approximation) of $f$ directly from data. In this section, we apply Algorithm \ref{alg: iterative scheme} to this problem and show that relatively accurate solutions can be obtained when the observed data is perturbed by a moderate amount of noise.

Before detailing the numerical experiments, we first define two relevant quantities used in our error analysis. Let $x\in\R^n$ be the (noise-free) coefficient vector and $\eta\in\R^n$ be the (mean-zero) noise. The signal-to-noise ratio (SNR) is defined by:
\begin{align*}
\snr(x,\eta) := 10\log_{10}\left(\dfrac{\|x-\text{mean}(x)\|_2^2}{\|\eta\|_2^2}\right).
\end{align*}
Given $Ax=b$, let $x_{\rm true}$ be the correct sparse solution that solves the noise-free linear system, and $x$ be the approximation of $x_{\rm true}$ returned by Algorithm \ref{alg: iterative scheme}. The relative error $E$ of $x$ is defined by:
\begin{align*}
E(x):= \dfrac{\|x-x_{\rm true}\|_2}{\|x_{\rm true}\|_2}.
\end{align*}

%%%%%

\subsection{The Lorenz System}

Consider the Lorenz system:
\begin{equation} \label{eq: lorenz system}
\begin{cases}
\dot{u}_1 = 10(u_2 - u_1), \\
\dot{u}_2 = u_1(28 - u_3) - u_2, \\
\dot{u}_3 = u_1u_2 - \frac{8}{3}u_3,
\end{cases}
\end{equation}
which produces chaotic solutions. To generate the synthetic data for this experiment, we set the initial data $u(0)=\begin{pmatrix} -5, 10, 30 \end{pmatrix}^T$ and evolve the system using the Runge-Kutta method of order 4 up to time-stamp $T = 10$ with time step $h=0.025$. The simulated data is defined as $u(t)$. The noisy data $\tilde u(t)$ is obtained by adding Gaussian noise directly to $u(t)$:
\begin{align*}
\tilde u = u + \eta, \quad \eta \sim \mathcal{N}(0,\sigma^2).
\end{align*}
Let $A=A(\tilde u(t))$ be the dictionary matrix consisting of (tensorized) polynomials in $\tilde u$ up to order $p$:
\begin{align*}
A = 
\begin{pmatrix}
| & | & | & | & & | \\
1 & P(\tilde u(t)) & P^2(\tilde u(t)) & P^3(\tilde u(t)) & \cdots & P^p(\tilde u(t)) \\
| & | & | & | & & |
\end{pmatrix},
\end{align*}
where
\begin{subequations} \label{eq: lorenz polynomial}
\begin{align}
P(\tilde u(t)) &:= 
\begin{pmatrix}
| & | & | \\ \tilde u_1(t) & \tilde u_2(t) & \tilde u_3(t) \\ | & | & |
\end{pmatrix}, \\
P^2(\tilde u(t)) &:= 
\begin{pmatrix}
| & | & | & | & | & | \\ \tilde u_1(t)^2 & \tilde u_1(t)\tilde u_2(t) & \tilde u_1(t)\tilde u_3(t) & \tilde u_2(t)^2 & \tilde u_2(t)\tilde u_3(t) & \tilde u_3(t)^2 \\ | & | & | & | & | & |
\end{pmatrix}, \\
P^3(\tilde u(t)) &:= 
\begin{pmatrix}
| & | & | & | &  & | \\
\tilde u_1(t)^3 & \tilde u_1(t)^2 \tilde u_2(t) & \tilde u_1(t)^2\tilde u_3(t) & \tilde u_1(t)\tilde u_2(t)^2 & \cdots & \tilde u_3(t)^3 \\
| & | & | & | &  & |
\end{pmatrix},
\end{align}
\end{subequations}
and so on. Each column of the matrices in Equation~\ref{eq: lorenz polynomial} is a particular polynomial (candidate function) and each row is a fixed time-stamp.  Let $b$ be the numerical approximation of $\dot u$:
\begin{align} \label{eq: lorenz derivative}
b_i(kh) := \begin{cases}
\dfrac{\tilde u_i(h)-\tilde u_i(0)}{h} &\quad\text{if }kh=0, \\[10pt]
\dfrac{\tilde u_i((k+1)h)-\tilde u_i((k-1)h)}{2h} &\quad\text{if }0<kh<T, \\[10pt]
\dfrac{\tilde u_i(T)-\tilde u_i(T-h)}{h} &\quad\text{if }kh=T,\\
\end{cases}
\end{align} 
for $i=1,2,3$. Note that $b$ is approximated directly from the noisy data, so it will be inaccurate (and likely unstable). We want to recover the governing equation for the Lorenz system ({\it i.e.}, the right-hand side of Equation \eqref{eq: lorenz system}) by finding a sparse approximation to solution of the linear system $Ax = b$ using Algorithm \ref{alg: iterative scheme}. 

With $p=5$ and $\lambda=0.8$, we apply Algorithm \ref{alg: iterative scheme} on data with different noise levels. The resulting approximations for $x$ (the coefficients) are listed in Table \ref{tab: lorenz coefficient}. The identified systems are:
\begin{enumerate}[(i)]
\item $\sigma^2=0.1$ (where $\snr(u,\eta)=41.1508$):
\begin{align} \label{eq: lorenz learned1}
\begin{cases}
\dot{u}_1 = -9.8122\,u_1 + 9.8163\,u_2 \\
\dot{u}_2 = 27.1441\,u_1 - 0.8893\,u_2 - 0.9733\,u_1\,u_3 \\
\dot{u}_3 = - 2.6238\,u_3 + 0.9841\,u_1\,u_2
\end{cases}
\end{align}
with $E(x)=0.0278$;
\item $\sigma^2=0.5$ (where $\snr(u,\eta)=27.0682$):
\begin{align} \label{eq: lorenz learned2}
\begin{cases}
\dot{u}_1 = -9.7012\,u_1 + 9.6980\,u_2 \\
\dot{u}_2 = 27.0504\,u_1 - 0.8485\,u_2 - 0.9717\,u_1\,u_3 \\
\dot{u}_3 = - 2.6197\,u_3 + 0.9834\,u_1\,u_2
\end{cases}
\end{align}
with $E(x)=0.0334$.
\end{enumerate}

\begin{table}[b!] 
\caption{\textbf{Lorenz System:} The recovered coefficients for two noise levels.} 
\label{tab: lorenz coefficient}
\centering \vskip5pt
\begin{tabular}{c | ccc | ccc} \toprule
\multirow{2}{*}{$A$} & \multicolumn{3}{c|}{$\sigma^2=0.1$} & \multicolumn{3}{c}{$\sigma^2=0.5$} \\ \cline{2-7}
& {$\dot{u}_1$} & {$\dot{u}_2$} & {$\dot{u}_3$} & {$\dot{u}_1$} & {$\dot{u}_2$} & {$\dot{u}_3$} \\ \toprule
{$1$} 	& 0	& 0	& 0	& 0	& 0	& 0  \\ \midrule
{$u_1$} 	& -9.8122	& 27.1441& 0	& -9.7012 & 27.0504 & 0 \\ \midrule
{$u_2$}	& 9.8163	& -0.8893 & 0	& 9.6980	& -0.8485	& 0 \\ \midrule
{$u_3$}	& 0	& 0	& -2.6238	& 0	& 0	& -2.6197 \\ \midrule
{$u_1^2$} & 0	& 0	& 0	& 0	& 0	& 0 \\ \midrule
{$u_1u_2$}& 0 	& 0	&0.9841	& 0	& 0	& 0.9834 \\ \midrule
{$u_1u_3$}& 0 	& -0.9733	& 0	& 0	& -0.9717	& 0 \\ \midrule
{$u_2^2$}	& 0	& 0	& 0	& 0	& 0	& 0 \\ \midrule
{$u_2u_3$}& 0	& 0	& 0	& 0	& 0	& 0 \\ \midrule
{$u_3^2$} & 0	& 0	& 0	& 0	& 0	& 0 \\ \midrule
{$u_1^3$} & 0	& 0	& 0	& 0	& 0	& 0 \\ \midrule
{$\vdots$} & {$\vdots$} & {$\vdots$} & {$\vdots$} & {$\vdots$} & {$\vdots$} &{$\vdots$}  \\ \midrule
{$u_3^5$} & 0	& 0	& 0	& 0	& 0	& 0 \\ \bottomrule
\end{tabular}\end{table}

To compare between the identified and true systems in the presence of additive noise on the observed data, we simulate the systems up to time-stamps $t = 20$ and $t=100$. The resulting trajectories are shown in Figures \ref{fig: lorenz plot2} and \ref{fig: lorenz plot3}.  

Figure 3.\ref{fig: lorenz1} shows that for a relatively small amount of noise ($\sigma^2=0.1$) the trajectory of the identified system almost coincide with the Lorenz attractor for a short time, specifically from $t=0$ to about $t=5$. On the other hand, Figure 3.\ref{fig: lorenz2} shows that for a larger amount of noise ($\sigma^2=0.5$) the error between the trajectories of the identified system and the Lorenz attractor remains small for a shorter time (up to about $t=4$). As expected, increasing the amount of noise will cause larger errors on the estimated parameters, and thus on the predicted trajectories. 
In both cases, the algorithm picks out the correct terms in the model. Increasing the noise will eventually lead to incorrect solutions.

\begin{figure}[h!]
\centering
\includegraphics[width = 6 in]{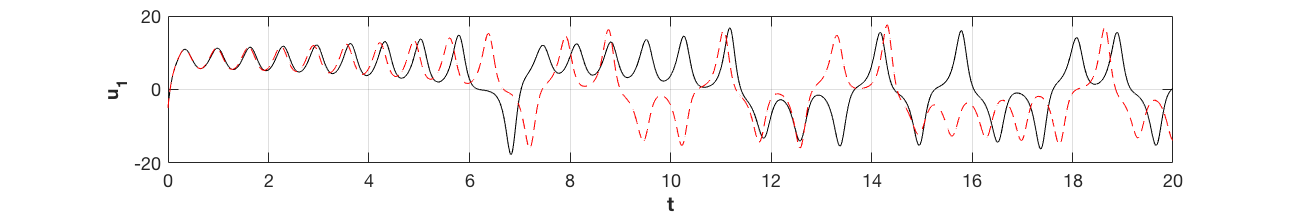}
\includegraphics[width = 6 in]{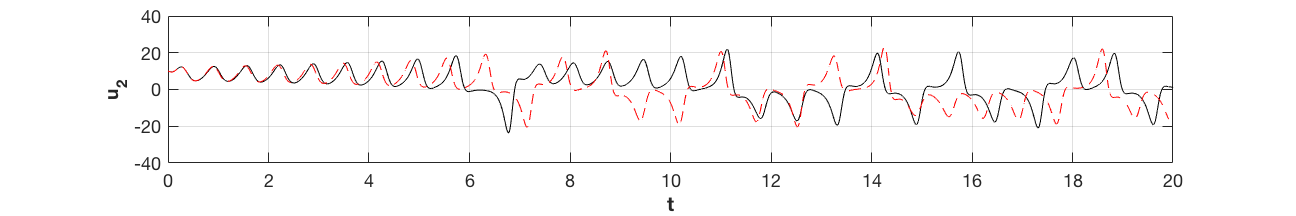}
\subfigure[$\sigma^2=0.1$]{\label{fig: lorenz1}\includegraphics[width = 6 in]{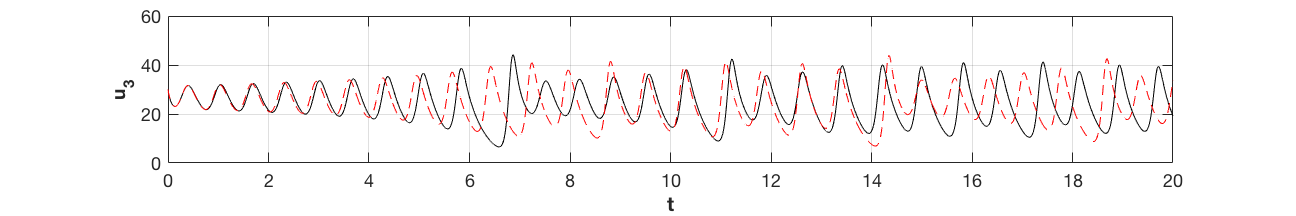}}
\includegraphics[width = 6 in]{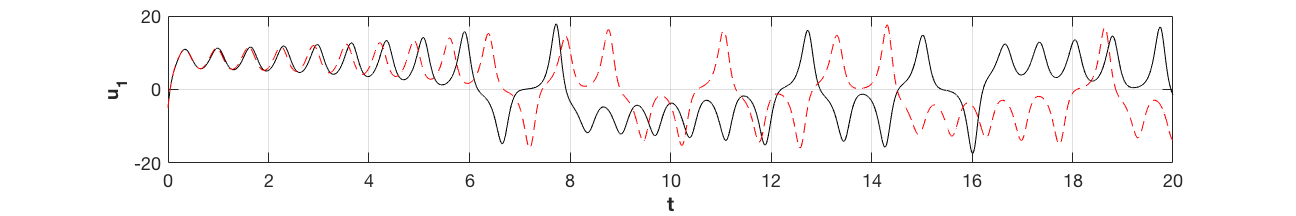}
\includegraphics[width = 6 in]{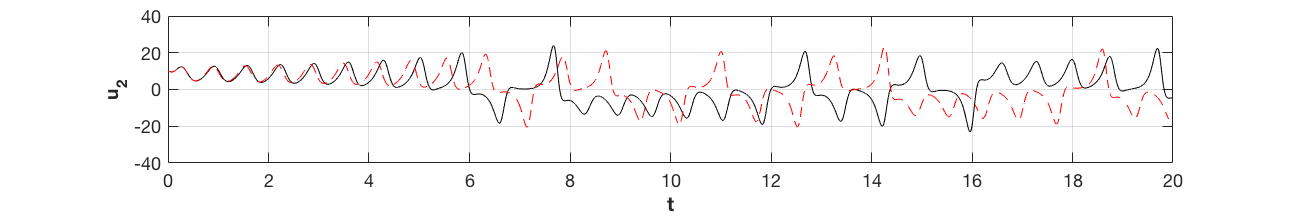}
\subfigure[$\sigma^2=0.5$]{\label{fig: lorenz2}\includegraphics[width = 6 in]{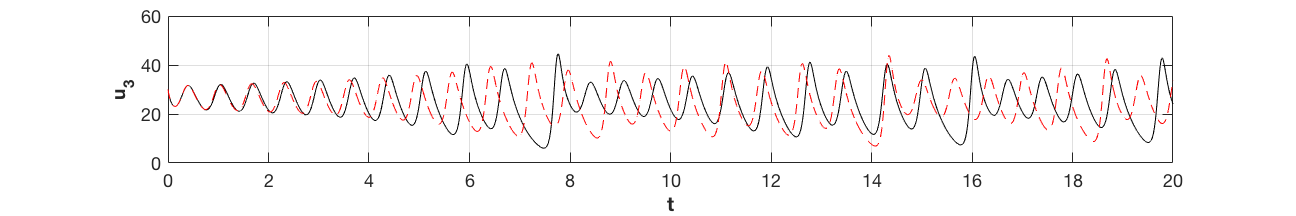}}
\caption{\textbf{Lorenz system}: Component-wise evolution of the trajectories. Solid line: the trajectory of the identified systems defined by: (a) Equation \eqref{eq: lorenz learned1} and (b) Equation \eqref{eq: lorenz learned2}, respectively. Red dashed line: the ``true'' Lorenz attractor.}
\label{fig: lorenz plot2}
\end{figure}

\begin{figure}[h!]
\centering
\subfigure[The Lorenz attractor]{\label{fig: lorenz exact}
	\includegraphics[width = 3 in]{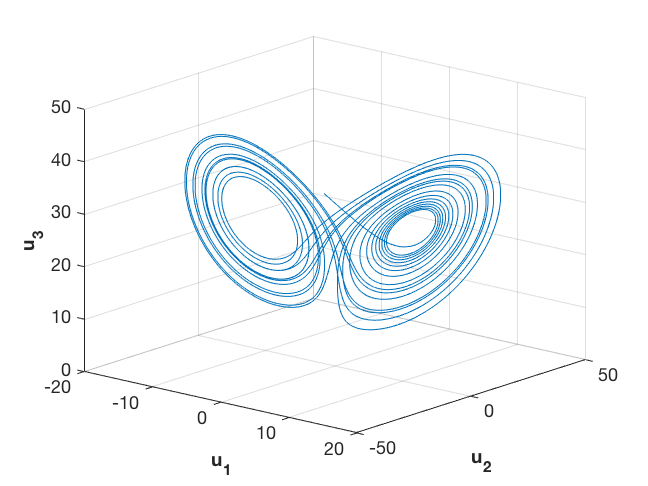}
	\includegraphics[width = 3 in]{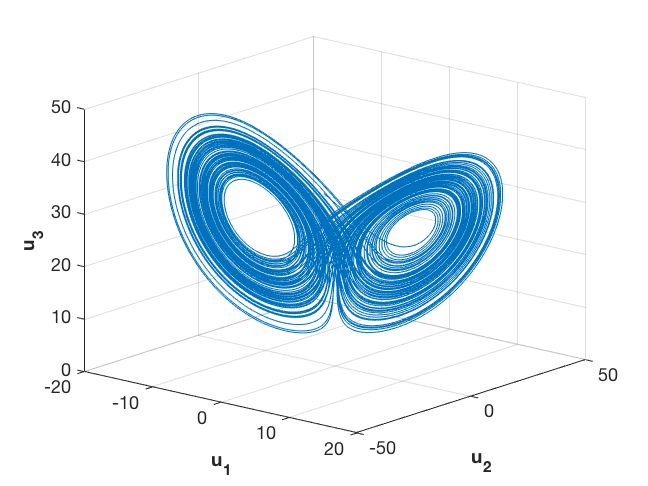}
	}
\subfigure[The trajectory defined by Equation \eqref{eq: lorenz learned1}]{
	\includegraphics[width = 3 in]{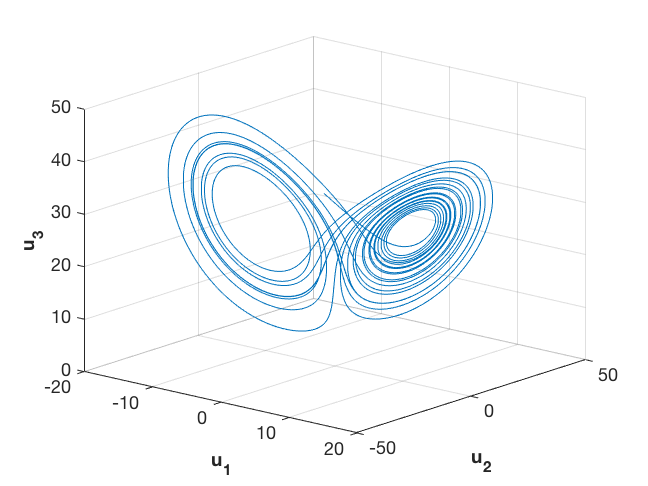}
	\includegraphics[width = 3 in]{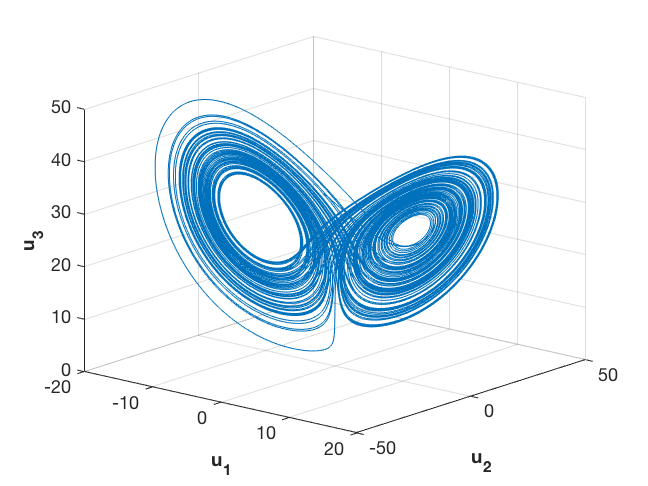}
	}
\subfigure[The trajectory defined by Equation \eqref{eq: lorenz learned2}]{
	\includegraphics[width = 3 in]{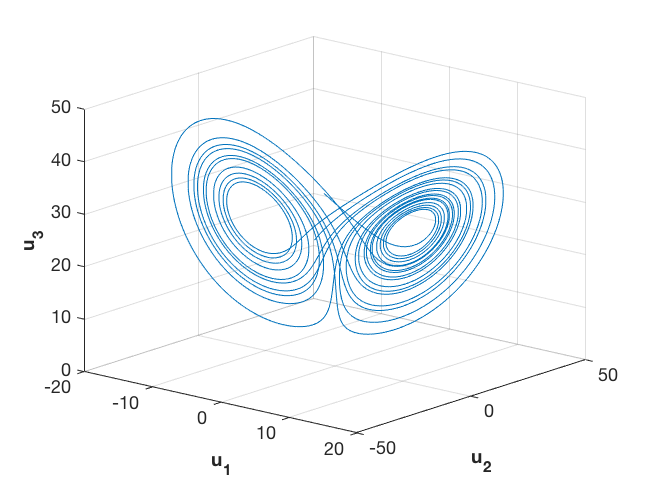}
	\includegraphics[width = 3 in]{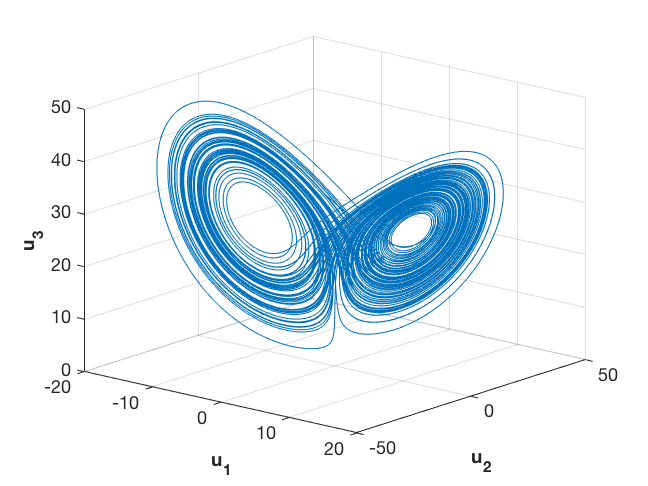}
	}
\caption{\textbf{Lorenz System}: Trajectories of the Lorenz system from $t=0$ to $t=20$ (left column) and from $t=0$ to $t=100$ (right column). (a) The ``true'' Lorenz attractor defined by Equation \eqref{eq: lorenz system}. (b) The trajectory defined by Equation \eqref{eq: lorenz learned1}, which is identified from data with additive noise $\sigma^2=0.1$. (c) The trajectory defined by Equation \eqref{eq: lorenz learned2}, which is identified from data with additive noise $\sigma^2=0.5$. }
\label{fig: lorenz plot3}
\end{figure}

\subsection{The Thomas System}

Consider the Thomas system:
\begin{equation} \label{eq: thomas system}
\begin{cases}
\dot{u}_1 = -0.18 u_1 + \sin(u_2), \\
\dot{u}_2 = -0.18 u_2 + \sin(u_3), \\
\dot{u}_3 = -0.18 u_4 + \sin(u_1).
\end{cases}
\end{equation}
which is a non-polynomial system whose trajectories form a chaotic attractor.  We simulate $u(t)$ using the initial condition $u(0)=\begin{pmatrix} 1, 1, 0 \end{pmatrix}^T$ and by evolving the system using the Runge-Kutta method of order 4 up to time-stamp $t = 100$ with time step $h=0.025$. We then add Gaussian noise to $u$ and obtain the observed noisy data $\tilde u(t)$:
\begin{align*}
\tilde u = u + \eta, \quad \eta \sim \mathcal{N}(0,\sigma^2).
\end{align*}
Let $b$ be the numerical approximation of $\dot u$ which is defined by Equation \eqref{eq: lorenz derivative}. To identify the governing equation for the data generated by the Thomas system ({\it e.g.} the right-hand side of Equation \eqref{eq: thomas system}), we apply the algorithm to the linear system whose dictionary matrix $A=A(\tilde u(t))$ consists of three sub-matrices:
\begin{align*}
A = 
\begin{pmatrix}
| & | & | \\
A_P & A_{\sin} & A_{\cos} \\
| & | & |
\end{pmatrix},
\end{align*}
where
\begin{align*}
A_P &= 
\begin{pmatrix}
| & | & | & | & & | \\
1 & P(\tilde u(t)) & P^2(\tilde u(t)) & P^3(\tilde u(t)) & \cdots & P^{p_1}(\tilde u(t)) \\
| & | & | & | & & | 
\end{pmatrix}, \\
A_{\sin} &= 
\begin{pmatrix}
| & | & | & & | \\
\sin\left(P(\tilde u(t))\right) & \sin\left(P^2(\tilde u(t))\right) & \sin\left(P^3(\tilde u(t))\right) & \cdots & \sin\left(P^{p_2}(\tilde u(t))\right) \\
| & | & | & & |
\end{pmatrix}, \\
A_{\cos} &= 
\begin{pmatrix}
| & | & | & & | \\
\cos\left(P(\tilde u(t))\right) & \cos\left(P^2(\tilde u(t))\right) & \cos\left(P^3(\tilde u(t))\right) & \cdots & \cos\left(P^{p_3}(\tilde u(t))\right) \\
| & | & | & & |
\end{pmatrix}.
\end{align*}
Here, $P^p$ is defined by Equation \eqref{eq: lorenz polynomial}, which denotes the matrix consisting of polynomials in $\tilde u$ of order $p$. The matrices $\sin(P^p)$ and $\cos(P^p)$ are obtained by applying the sine and cosine functions to each element of $P^p$, respectively.

With $p_1=3$, $p_2=p_3=1$, and $\lambda=0.1$, we apply the algorithm to data with different noise levels. The resulting approximations for $x$ are listed in Table \ref{tab: thomas coefficient}. The identified systems are:
\begin{enumerate}[(i)]
\item $\sigma^2=0.1$ (where $\snr(u,\eta)=25.8469$):
\begin{align} \label{eq: thomas learned1}
\begin{cases}
\dot{u}_1 = -0.1805u_1 + 1.0014\sin(u_2) \\
\dot{u}_2 = -0.1799u_2 + 1.0038\sin(u_3) \\
\dot{u}_3 = -0.1803u_3 + 0.9992\sin(u_1)
\end{cases}
\end{align}
with $E(x)=0.0023$;
\item $\sigma^2=0.5$ (where $\snr(u,\eta)=11.8738$):
\begin{align} \label{eq: thomas learned2}
\begin{cases}
\dot{u}_1 = -0.1835u_1 + 1.0304\sin(u_2) \\
\dot{u}_2 = -0.1848u_2 + 0.9956\sin(u_3) \\
\dot{u}_3 = -0.1725u_3 + 0.9658\sin(u_1)
\end{cases}
\end{align}
with $E(x)=0.0267$.
\end{enumerate}

\begin{table}[b!] 
\caption{\textbf{The Thomas system}: The recovered coefficients for two noise levels.} 
\label{tab: thomas coefficient}
\centering \vskip5pt
\begin{tabular}{c | ccc | ccc} \toprule
\multirow{2}{*}{$A$} & \multicolumn{3}{c|}{$\sigma^2=0.1$} & \multicolumn{3}{c}{$\sigma^2=0.5$} \\ \cline{2-7}
& {$\dot{u}_1$} & {$\dot{u}_2$} & {$\dot{u}_3$} & {$\dot{u}_1$} & {$\dot{u}_2$} & {$\dot{u}_3$} \\ \toprule
{$1$} 	& 0	& 0	& 0	& 0	& 0	& 0  \\ \midrule
{$u_1$} 	& -0.1805	& 0	& 0	& -0.1835 & 0 & 0 \\ \midrule
{$u_2$}	& 0	& -0.1799 & 0	& 0	& -0.1848	& 0 \\ \midrule
{$u_3$}	& 0	& 0	& -0.1803	& 0	& 0	& -0.1725 \\ \midrule
{$u_1^2$} & 0	& 0	& 0	& 0	& 0	& 0 \\ \midrule
{$u_1u_2$}& 0 	& 0	& 0	& 0	& 0	& 0 \\ \midrule
{$u_1u_3$}& 0 	& 0	& 0	& 0	& 0	& 0 \\ \midrule
{$u_2^2$}	& 0	& 0	& 0	& 0	& 0	& 0 \\ \midrule
{$\vdots$} & {$\vdots$} & {$\vdots$} & {$\vdots$} & {$\vdots$} & {$\vdots$} &{$\vdots$}  \\ \midrule
{$u_3^3$} & 0	& 0	& 0	& 0	& 0	& 0 \\ \midrule
{$\sin(u_1)$} & 0 & 0 & 0.9992 & 0 & 0 & 0.9658 \\ \midrule
{$\sin(u_2)$} & 1.0014 & 0 & 0 & 1.0304 & 0 & 0 \\ \midrule
{$\sin(u_3)$} & 0 & 1.0038 & 0 & 0 & 0.9956 & 0 \\ \midrule
{$\cos(u_1)$} & 0	& 0	& 0	& 0	& 0	& 0 \\ \midrule
{$\cos(u_2)$} & 0	& 0	& 0	& 0	& 0	& 0 \\ \midrule
{$\cos(u_3)$} & 0	& 0	& 0	& 0	& 0	& 0 \\ \bottomrule
\end{tabular}\end{table}

Observe that the identified system defined by Equation \eqref{eq: thomas learned1} is exact up to two significant digits. We simulate this system up to time-stamps $t=200$ and $t=1000$ and compare it with the trajectories of the Thomas system. We show the short-time evolution of the trajectories in Figure \ref{fig: thomas plot2} and the long-time dynamics in Figure \ref{fig: thomas plot3}. It can be observed that although the coefficients are not exact, the trajectory of the identified system traces out a similar region to the exact trajectory in state-space.

\begin{figure}[h!]
\centering
\includegraphics[width = 6 in]{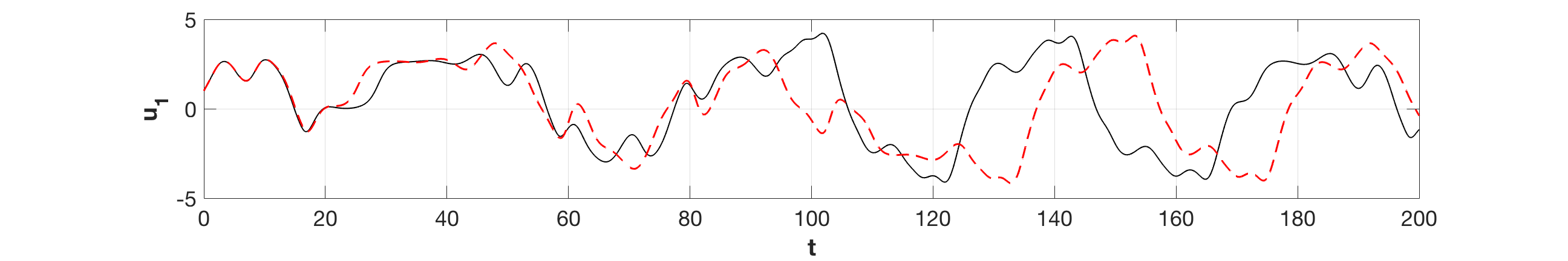}
\includegraphics[width = 6 in]{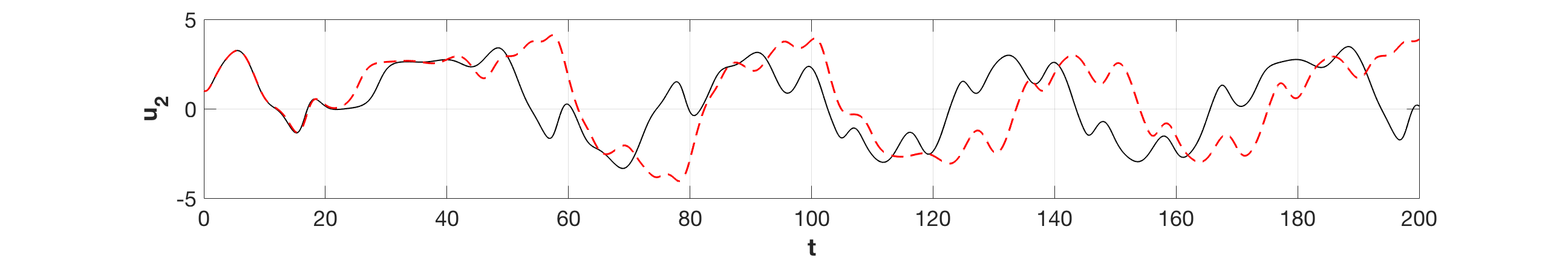}
\includegraphics[width = 6 in]{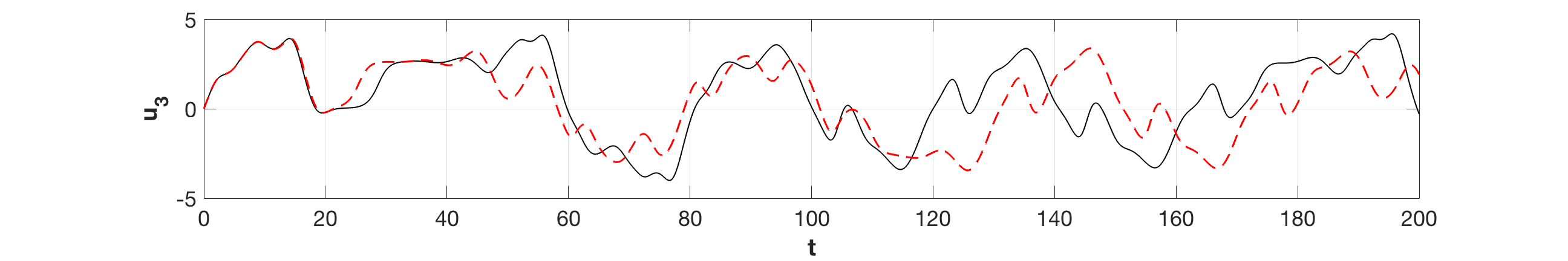}
\caption{\textbf{The Thomas system}: Component-wise evolution of the trajectories. Solid line: the trajectory of the identified system defined by Equation \eqref{eq: thomas learned1}. Red dashed line: the ``true'' Thomas trajectory.}
\label{fig: thomas plot2}
\end{figure}

\begin{figure}[h!]
\centering
\subfigure[The Thomas trajectory]{\label{fig: thomas exact}
	\includegraphics[width = 3 in]{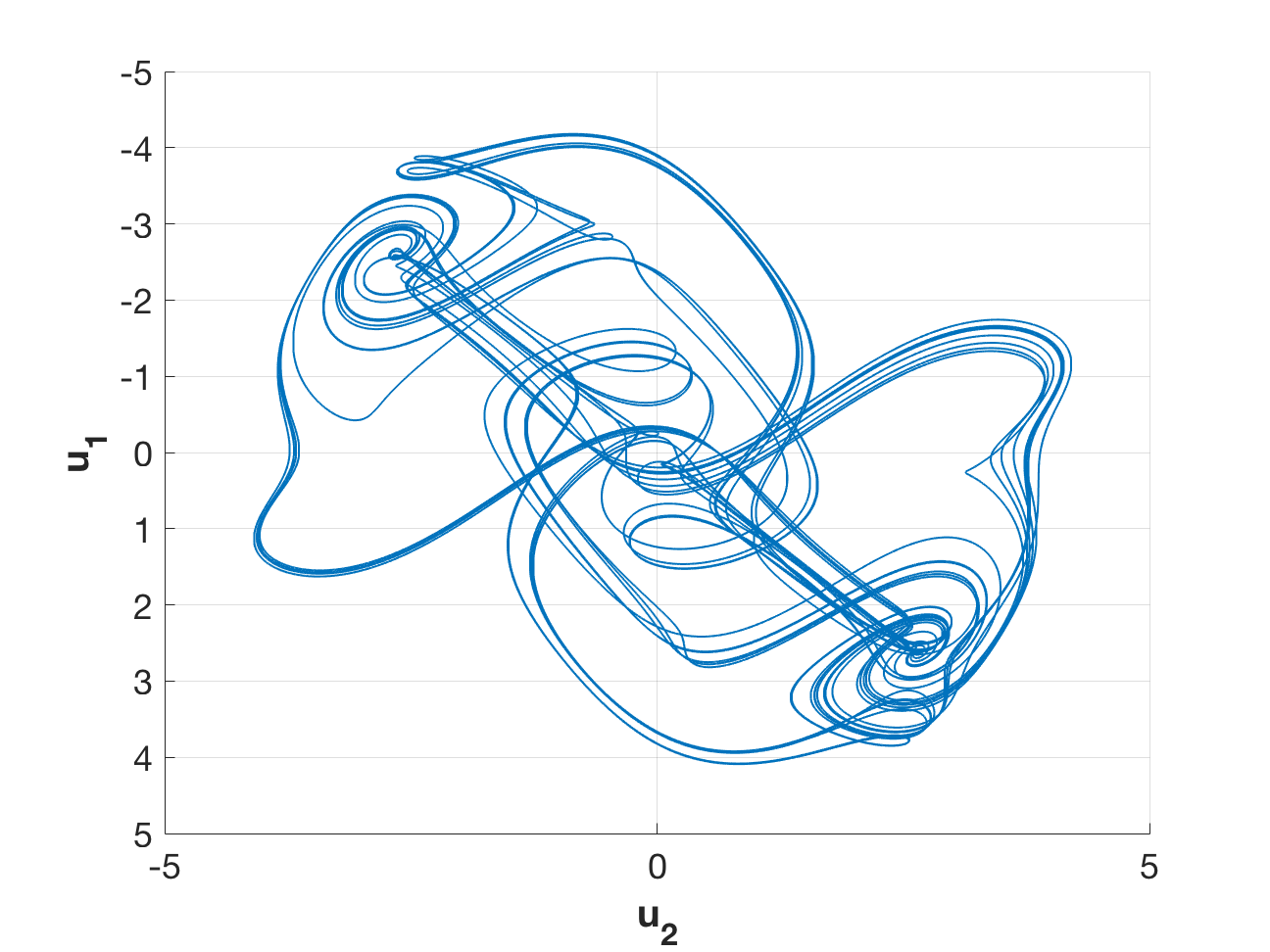}
	}
\subfigure[The trajectory defined by Equation \eqref{eq: thomas learned1}]{
	\includegraphics[width = 3 in]{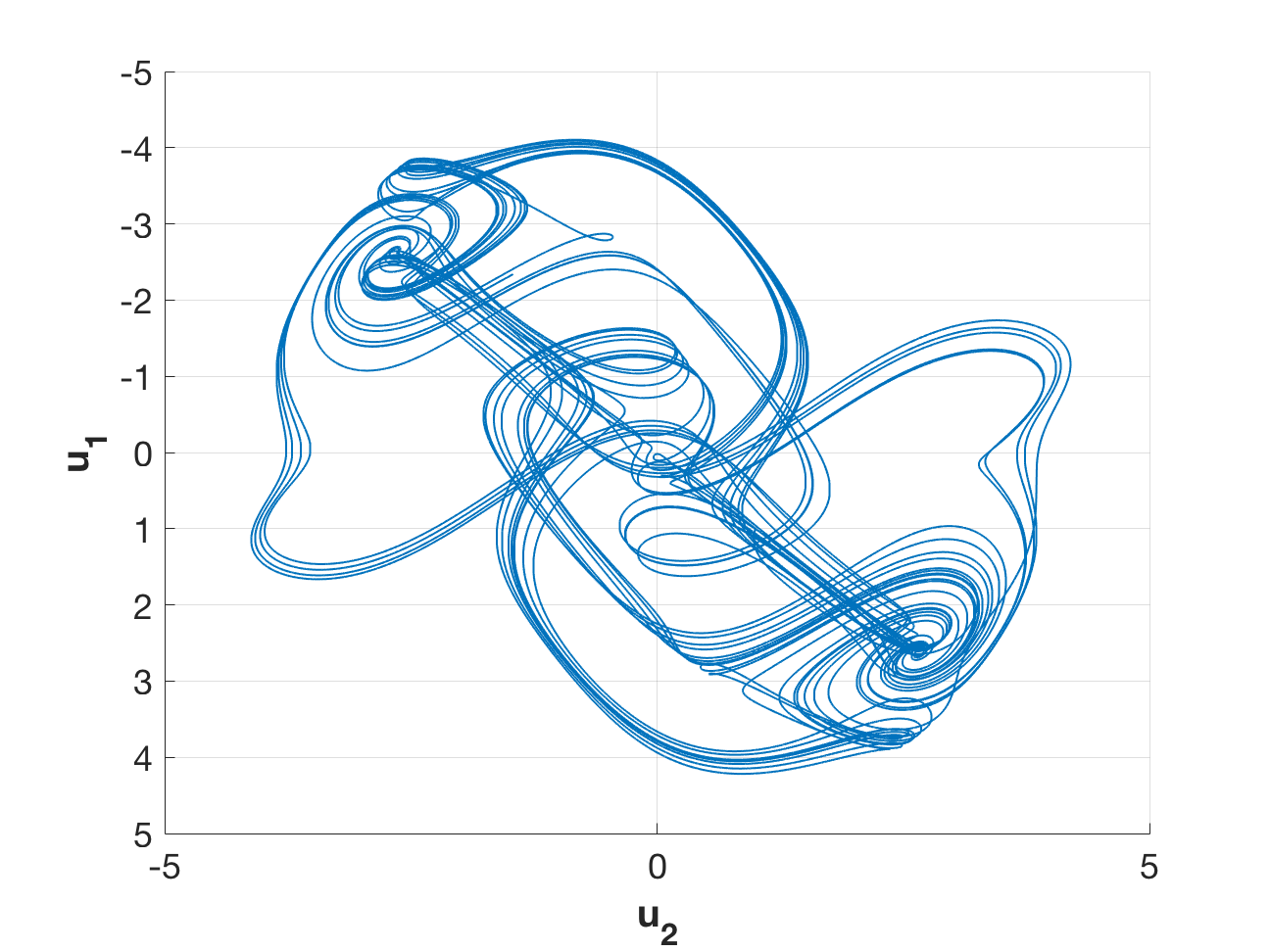}
	}
\caption{\textbf{The Thomas system:} Trajectories of the learned and ``true'' Thomas system from $t=0$ to $t=1000$ (right column). (a) The Thomas trajectory defined by Equation \eqref{eq: thomas system}. (b) The trajectory defined by Equation \eqref{eq: thomas learned1}, which is identified from data with additive noise $\sigma^2=0.1$. }
\label{fig: thomas plot3}
\end{figure}

\section{Discussion}
\label{sec: discussion}

The SINDy algorithm proposed in \cite{BruntonProctorKutz16PNAS} has been applied to various problems involving sparse model identification from complex dynamics. In this work, we provided several theoretical results that characterized the solutions produced by the algorithm and provided the rate of convergence of the algorithm. The results included showing that the algorithm approximates local minimizers of the $\ell^0$-penalized least-squares problem, and thus can be characterized through various sparse optimization results. In particular, the algorithm produces a  minimizing sequence, which converges to a fixed-point rapidly, thereby providing theoretical support for the observed behavior. Several examples show that the convergence rates are sharp. In addition, we showed that iterating the steps is required, in particular, it is possible to obtain solutions through iterating that cannot be obtain via thresholding of the least-squares solution. In future work, we would like to better characterize the effects of noise, detailed in Section~\ref{sec: application}. It would be useful to have a quantifiable relationship between the thresholding parameter, the noise, and the expected recovery error.

\section*{Acknowledgement}

The authors would like to thank J. Nathan Kutz for helpful discussions. H.S. and L.Z. acknowledge the support of AFOSR, FA9550-17-1-0125 and the support of NSF CAREER grant $\#1752116$.

\bibliography{algorithmproof_ref}
\bibliographystyle{plain}

\end{document}